\numberwithin{equation}{section}
\newtheorem{thm}{Theorem}[section]
\newtheorem{pro}[thm]{Proposition}
\newtheorem{lemma}[thm]{Lemma}
\newtheorem{cor}[thm]{Corollary}
\newtheorem{question}[thm]{Question}
\theoremstyle{definition}
\newtheorem{definition}[thm]{Definition}
\newtheorem{remark}[thm]{Remark}
\DeclareMathOperator*{\Vect}{Vect}
\DeclareMathOperator*{\PVect}{PVect}
\DeclareMathOperator*{\BL}{BL}
\newcommand{\Aut}{{\rm Aut}}
\newcommand{\Hom}{{\rm Hom}}
\newcommand{\scrK}{\mathcal{K}}
\newcommand{\scrp}{\mathfrak{p}} 
\newcommand{\scrq}{\mathfrak{q}} 
\newcommand{\scrE}{\mathcal{E}}
\newcommand{\cO}{\mathcal{O}}
\newcommand{\scrV}{\mathcal{V}}
\begin{document}

\title{The tame Nori fundamental group} 

\author[I. Biswas]{Indranil Biswas}

\address{Department of Mathematics, Shiv Nadar University, NH91, Tehsil
Dadri, Greater Noida, Uttar Pradesh 201314, India}

\email{indranil.biswas@snu.edu.in, indranil29@gmail.com}

\author[M. Kumar]{Manish Kumar}

\address{Statistics and Mathematics Unit, Indian Statistical Institute,
Bangalore 560059, India}

\email{manish@isibang.ac.in}

\author[A.J. Parameswaran]{A. J. Parameswaran}

\address{Kerala School of Mathematics, Kunnamangalam PO, Kozhikode, Kerala, 673571, India}

\email{param@ksom.res.in}

\subjclass[2010]{14H30, 14J60}

\keywords{Parabolic bundle, orbifold bundle, Tannakian category, essentially finite bundle}

\begin{abstract}
 We introduce three notion of tameness of the Nori fundamental group scheme for a normal quasiprojective
variety $X$ over an algebraically closed field. It is proved that these three notions agree if $X$ admits a smooth
completion with strict normal crossing divisor as the complement. We also prove a Lefschetz
type restriction theorem for the tame Nori fundamental group scheme for such an $X$.
\end{abstract}

\maketitle

\section{Introduction}

Let $X$ be a variety defined over an algebraically closed field $k$, and fix a closed point $x$ of $X$.
Let $\pi^N(X,\,x)(k)$ denote the $k$-rational points of the Nori fundamental group scheme 
defined by Nori using the finite group scheme torsors over $X$ \cite{No}.
There is a natural surjective map $\pi^N(X,\,x)(k)\,\longrightarrow \,\pi_1^{et}(X,\,x)$
to the \'etale fundamental group, because $\pi_1^{et}(X,\,x)$ corresponds to the torsors on $X$ for (reduced)
finite groups.
On the \'etale side, when $X$ is quasi-projective normal curve one defines 
$\pi_1^{et,t}(X,\,x)$ as the quotient of $\pi_1^{et}(X,\,x)$ which represents \'etale covers of $X$ which are tamely ramified at the 
boundary points. The notion of tameness becomes a little more subtle for covers of higher dimensional normal varieties (see 
\cite{KS-tame}).

In this article we provide three different definitions of the tame Nori fundamental group scheme (see Proposition \ref{2.9} and
Proposition \ref{2.11}) 
and discuss relationship between them. When $X$ is projective then $\pi^N(X,\,x)$ has a Tannakian description. In \cite{Nori-pi1} we defined
Nori fundamental group of projective formal orbifolds as Tannaka dual of a Tannakian category of certain equivariant bundles. Moreover, using
this notion, we also
defined a group scheme $\pi^n(X^o,\,x)$ when $X^o$ is a quasi-projective normal variety. It was shown in \cite{Nori-pi1} that there is a surjection $\pi^N(X^o,\,x)\,\longrightarrow\,
\pi^n(X^o,\,x)$; however, it is not clear if this is an isomorphism. 

Two of the three versions of the tame Nori fundamental group 
schemes introduced here are quotients of $\pi^n(X^o,\,x)$ and all the versions agree if $X^o$ admits a smooth completion $X$ with $D=X\,\setminus\, X^o$ a 
strict normal crossing divisor (Proposition \ref{3.1} and Theorem \ref{orb-par-corr}).

In the above set-up, Lefschetz theorem holds for the tame 
fundamental group \cite{EK}. We partially extend it to the tame Nori fundamental group scheme. More precisely, in Theorem \ref{4.5} we show 
that if $Z$ is a general hyperplane section of $X$ high enough degree, then the natural
homomorphism $\pi^{N,t}(Z\,\setminus\, Z\cap D)\,\longrightarrow \,\pi^{N,t}(X^o)$ is surjective if the dimension of $Z$ is at least one.

\subsection*{Notation and Preliminaries}
We recall the notation for branch data and formal orbifold from \cite[Section 3]{formal.orbifolds} and \cite[Section 2]{Nori-pi1}. 

By a variety we mean an integral finite type scheme over a field $k$. Let $X$ be a normal variety over a perfect 
field $k$. For a point $x\,\in\, X$ of codimension at least one, let $\scrK_{X,x}$ be the fraction field of 
$\widehat{\cO}_{X,x}$. For an open affine neighborhood $U\,\subset\, X$ of $x$, let $\scrK_X^x(U)$ be the field of 
fractions of $\widehat{\cO_X(U)}^x$ where $\widehat{\cO_X(U)}^x$ is the completion of $\cO_X(U)$ along $x$. A 
branch data $P$ on $X$ assigns for every pair $(x,\,U)$ a finite Galois extension $P(x,U)$ of $\scrK_X^x(U)$ 
satisfying certain compatibility conditions (see \cite[Definition 3.3]{formal.orbifolds} or \cite[Section 
2]{Nori-pi1}. Let $P(x)\,=\,P(x,U)\scrK_{X,x}$ be the compositum. The branch locus $\BL(P)$ of $P$ consist of points 
$x\,\in\, X$ such that $\widehat{\cO}_{X,x}$ is branched in $P(x)$. One defines a morphism of formal orbifolds and 
$f\,:\,(Y,\,Q)\,\longrightarrow\, (X,\,P)$ is called \'etale if the field extension $Q(y)/P(f(x))$ is unramified for all 
$y\,\in \,Y$. A branch data in which every field extension is trivial is called the trivial branch data and is 
denoted by $O$. A branch data $P$ on $X$ is called geometric if there exists a Galois \'etale cover of formal 
orbifolds $f\,:\,(Y,\,O)\,\longrightarrow\, (X,\,P)$. In this case we also call $(X,\,P)$ to be a geometric
formal orbifold.

\section{Various notions of tame Nori fundamental group}

 Let $X^o$ be a regular quasiprojective variety over a perfect field $k$ of characteristic $p$. Let $X$ be a normal projective variety containing $X^o$ as an
open subset. Recall the following 
definitions of tameness from \cite{KS-tame}. An \'etale covering $Y^o\,\longrightarrow\,
X^o$ is said to 
have \emph{curve-tame} ramification at the boundary if the normalization of the pullback $C^o\times_{X^o} Y^o
\,\longrightarrow\, C^o$ is tamely ramified outside $C^o$ for every morphism $C^o\,\longrightarrow\, 
X^o$ with $C^o$ a regular quasi-projective curve. Note that an \'etale covering $D^o\longrightarrow C^o$ of quasi-projective 
regular curves is said to be tamely ramified outside $C^o$ if the induced morphism $D\,\longrightarrow\, 
C$ is tamely ramified at the points over $C\,\setminus\, C^o$ where $C$ and $D$ are the regular 
projective compactification of $C^o$ and $D^o$ respectively. Let $f:Y\,\longrightarrow\, X$ be the 
normalization of $X$ in the function field $k(Y)$ of $Y$ and further assume that $k(Y)/k(X)$ is a Galois extension. The covering $f$ is said to be 
\emph{numerically tame} if for all $y\in Y\,\setminus\, Y^o$, the inertia group $I_y$ is of 
order prime to $p$.

There are multiple ways to think about the tame Nori fundamental group scheme of $X^o$.

\subsection{Two definitions using formal orbifolds}

Let $(X,\,P)$ be a normal projective formal orbifold over $k$ such that $X^o$ is an open 
subset of $X$ and the branch locus of $P$ lies outside $X^o$.  In \cite{Nori-pi1} Nori 
fundamental group of such a pair $(X,\,P)$ was defined using the essentially finite Galois 
equivariant bundles of an appropriate cover of $(X,\,P)$. Recall from \cite[Definition 
9.5]{formal.orbifolds} that the branch data $P$ is said to be \emph{numerically tame} if 
for every point $x \,\in\, X$ of codimension at least one the extension $P(x)/\scrK_{X,x}$ is 
tamely ramified. Also, $P$ is called curve-tame at
a closed point $x\,\in\, X$ if the following holds:

For any coheight one prime ideal $\scrp$ in $\widehat{ \cO_{X,x}}$ (i.e., $\dim(\widehat{\cO_{X,x}}/\scrp)=1$) 
in the branch locus $\BL(P)$, and for any coheight one prime ideal $\scrq$ in the integral closure 
$\cO_{P(x)}$ of $\widehat{ \cO_{X,x}}$ in $P(x)$ lying above $\scrp$, let $R$ and $S$ be 
the normalizations of $\widehat{ \cO_{X,x}}/\scrp$ and $\cO_{P(x)}/\scrq$ respectively. 
Then $P$ is called curve-tame at 
$x\,\in\, X$ if $S/R$ is at most a tamely ramified extension.

The branch data $P$ is called \emph{curve-tame} if it is curve-tame for all closed points $x\,
\in\, X$. A branch data $P$ on $X$ is called {\it geometric} if $P\,=\,B_f$ for some finite cover $f\,:\,Y\,\longrightarrow\, X$.

A branch data which is both geometric and curve-tame will be called a geometric curve-tame
branch data. Recall the following result of \cite{formal.orbifolds} which characterizes
geometric curve-tame branch data.

\begin{pro}[{\cite[Proposition 9.6]{formal.orbifolds}}]
Let $X$ be a proper normal variety over $k$ and $f\,:\,Y\,\longrightarrow\, X$ a Galois
covering which is \'etale over a nonempty open subset $X^o$ of the regular locus of
$X$. The branch data $B_f$ is curve-tame if and only if $f$ is curve-tame.  
\end{pro}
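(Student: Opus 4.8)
The plan is to observe that both notions of curve-tameness are governed by the ramification of a single extension of complete discrete valuation rings attached to a formal branch, and then to match formal branches with honest algebraic curves. Write $A\,=\,\widehat{\cO}_{X,x}$ for a closed point $x$; this is a complete normal local domain of dimension $d\,=\,\dim X$, and the Galois cover $f$ base-changed to $\operatorname{Spec}A$ is the normalization $\operatorname{Spec}\cO_{P(x)}\,\longrightarrow\,\operatorname{Spec}A$, where $P\,=\,B_f$. For a coheight one prime $\scrp\,\subset\, A$ in $\BL(P)$ and a prime $\scrq\,\subset\,\cO_{P(x)}$ over it, the rings $R$ and $S$ are complete DVRs with $S/R$ finite, and by definition $P$ is curve-tame at $x$ exactly when every such $S/R$ is tame, i.e. has ramification index prime to $p$ and separable residue extension.

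First I would settle the implication that $B_f$ curve-tame forces $f$ curve-tame, which is the routine direction. Given any morphism $C^o\,\longrightarrow\, X^o$ from a regular quasi-projective curve, let $C$ be its regular compactification and $c\,\in\, C$ a point mapping to a boundary point $x\,\in\, X\,\setminus\, X^o$. The induced homomorphism $A\,\longrightarrow\,\widehat{\cO}_{C,c}$ has a coheight one kernel $\scrp$, and since $\widehat{\cO}_{C,c}$ is the integral closure of $A/\scrp$ in its fraction field it coincides with the ring $R$ attached to $\scrp$; the normalization of $C^o\times_{X^o}Y^o$ over $c$ is then governed by the corresponding $S/R$. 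Curve-tameness of $B_f$ gives tameness of $S/R$, hence tameness of the pullback at every boundary point, so $f$ is curve-tame.

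The substance is the converse, and the main obstacle is realizing an arbitrary formal branch by an actual curve. Fix $x$, a coheight one prime $\scrp\,\subset\, A$ in the branch locus, and a prime $\scrq$ above it, producing a possibly wild $S/R$; I want a morphism $C^o\,\longrightarrow\, X^o$ whose normalized pullback reproduces this ramification, so that curve-tameness of $f$ applies. The formal branch $A\,\longrightarrow\, R$ is a map to $X$ defined only over the completion, and the key step is to algebraize it: by Artin approximation one produces a regular algebraic curve $C^o\,\longrightarrow\, X^o$ together with a point $c$ over $x$ whose formal germ agrees with $A\,\longrightarrow\, R$ to sufficiently high order. Since tameness of an extension of complete DVRs is detected by the ramification index together with separability of the residue extension, both finite-order, \'etale-local invariants preserved under close enough approximation, the normalized pullback of $f$ along $C^o$ has the same ramification type as $S/R$ at $c$. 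Curve-tameness of $f$ then forces this to be tame, and as $x$, $\scrp$, $\scrq$ were arbitrary we conclude that $B_f$ is curve-tame.

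I would expect the technical care to concentrate on this approximation step: one must ensure that the approximating curve genuinely meets $X^o$ (so that $C^o\,\longrightarrow\, X^o$ is a legitimate test curve and is not contained in the branch locus), that the prime $\scrq$ under consideration is the one hit by $c$, and that the order of agreement is large enough to control not merely the ramification index but also the possible inseparability of the residue extension. Over an algebraically closed base the residue fields are trivial and only the ramification index must be tracked, which simplifies the argument considerably; in the general perfect-field case the separability bookkeeping is the delicate point.
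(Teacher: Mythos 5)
The paper does not actually prove this statement --- it is quoted verbatim from \cite[Proposition 9.6]{formal.orbifolds} --- so your argument must stand on its own, and it has genuine gaps in both directions. In the direction you call routine, the assertion that $\widehat{\cO}_{C,c}$ ``is the integral closure of $A/\scrp$ in its fraction field'' and hence coincides with $R$ is false in general: the germ of $C$ at $c$ need not be birational onto its formal image $V(\scrp)$ (consider an arc of the form $t\mapsto (t^p,\dots)$, whose kernel cuts out a formal curve of which the arc is a degree-$p$, possibly inseparable, cover). So $\widehat{\cO}_{C,c}$ is merely a finite extension of $R$, and tameness of the normalized pullback at $c$ does not follow from tameness of $S/R$ as written. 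It does follow after adding the (true, but unstated) lemma that tameness is stable under composita: if $S/R$ is tamely ramified and $R'/R$ is any finite extension of complete discrete valuation rings, then $S\cdot R'/R'$ is tamely ramified, because a tame extension is generated by an unramified extension together with an $e$-th root of a uniformizer with $p\nmid e$.

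The converse is where the proof really lives, and the step you defer to ``technical care'' hides an unsecured hypothesis. To transfer ramification between the formal branch and an algebraic approximant you need a Krasner-type rigidity statement: two arcs through $x$ agreeing modulo $t^N$, with $N$ exceeding (essentially) the valuation of the discriminant of the \'etale algebra $k(Y)\otimes_{k(X)}\operatorname{Frac}(R)$, pull back $f$ to isomorphic algebras. This requires that discriminant to be nonzero, i.e.\ the formal curve $V(\scrp)$ must not be contained in the branch locus. But the definition as recalled in this paper quantifies over coheight-one primes $\scrp$ \emph{in} the branch locus $\BL(P)$; for a formal curve inside the branch divisor the pulled-back algebra is nowhere generically \'etale, no finite-order jet determines the extension $S/R$, and your approximating curve --- whose generic point necessarily moves off the branch divisor --- computes something different, so the approximation strategy fails outright for exactly the primes it is nominally asked to treat. (Symmetrically, the germ at a boundary point of a test curve $C^o\to X^o$ is never contained in $\BL(P)$, so in your first direction you invoke the curve-tameness hypothesis on $B_f$ at a prime that the quoted definition may not cover.) You must therefore first reconcile your reading with the actual definition in \cite{formal.orbifolds} --- the formulation your argument implicitly uses, and the one under which the proposition is coherent, tests formal curves meeting the branch locus only at the closed point --- and then actually supply the missing ingredients: the explicit Krasner bound on $N$; the verification that the approximant is non-constant and meets $X^o$ (finite contact order with $X\setminus X^o$ is preserved once $N$ exceeds it); the residue-field/separability bookkeeping over a perfect, not algebraically closed, $k$ that you flag but do not carry out; and the passage from the factors of $k(Y)\otimes_{k(X)}\operatorname{Frac}(R)$ to the primes $\scrq$ of the compositum ring $\cO_{P(x)}$, using that composita of tamely ramified extensions are tamely ramified.
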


Let $x\,\in\, X^o$ be a generic geometric point of $X^o$, so it is given by a morphism $${\rm 
Spec}(K)\,\longrightarrow\, X^o,$$ where $K$ is the separable closure of $k(X^o)$.  We define
$$\pi_1^{et,nt}(X^o)\,:=\,\pi_1^{et,nt}(X^o,\,x)$$ where $\pi_1^{et,nt}(X^o,\,x)$ is the quotient of $\pi_1^{et}(X^o,\, x)$ obtained by taking the inverse limit of the Galois groups of the Galois
coverings of $X$ which are numerically tame along $X\setminus X^o$. Note that $\pi_1^{et,nt}(X^o)$ depends on the compactification $X$ as well. Similarly $\pi_1^{et,ct}(X^o)$ 
denotes the quotient of $\pi_1^{et}(X^o)$ which corresponds to curve-tame covers of $X^o$.

\begin{pro}
The group $\pi_1^{et,nt}(X^o)$ is the inverse limit of $\pi_1^{et}(X,\,P)$, where the
limit is over all the branch data $P$ on $X$ whose branch locus is outside $X^o$ and $P$
is numerically tame. Similarly, $\pi_1^{et,ct}(X^o)$ is the limit over the branch
data $P$ which are curve-tame.
\end{pro}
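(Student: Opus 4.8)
The plan is to realize both sides as fundamental groups of Galois categories and then identify those categories. I interpret $\pi_1^{et}(X,P)$, for a branch data $P$ with $\BL(P)\subset X\setminus X^o$, as the fundamental group of the Galois category $\mathcal{C}_P$ of finite coverings $Y\to X$ that are \'etale over $X^o$ and whose ramification along $X\setminus X^o$ is bounded by $P$ (equivalently, $Y$ carries an orbifold structure $Q$ making $(Y,Q)\to(X,P)$ \'etale); this is the category underlying the formal-orbifold fundamental group of \cite{Nori-pi1}. By definition $\pi_1^{et,nt}(X^o)$ is the fundamental group of the Galois category $\mathcal{C}^{nt}$ of all numerically tame Galois coverings of $X$ that are \'etale over $X^o$. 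It therefore suffices to exhibit $\mathcal{C}^{nt}$ as the filtered union of the $\mathcal{C}_P$, as $P$ ranges over the numerically tame branch data with branch locus outside $X^o$, and to invoke the standard inverse-limit description of the fundamental group of such a union.

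First I would establish the basic correspondence: for a Galois covering $f\,:\,Y\longrightarrow X$ that is \'etale over $X^o$, the morphism $(Y,O)\longrightarrow(X,B_f)$ is always \'etale by the construction of the branch data $B_f$, and $f$ is numerically tame if and only if $B_f$ is numerically tame. This last equivalence is the numerical-tameness analogue of the cited \cite[Proposition 9.6]{formal.orbifolds}: by construction $B_f(x)$ records the ramification of $k(Y)/k(X)$ at the points $y$ over $x$, so $B_f(x)/\scrK_{X,x}$ is tamely ramified exactly when every inertia group $I_y$ has order prime to $p$, i.e. exactly when $f$ is numerically tame. Conversely, if $P$ is numerically tame and $Y$ is an \'etale covering of $(X,P)$, then the inertia of $f$ at a boundary point is dominated by the inertia of $P(x)/\scrK_{X,x}$, which is prime to $p$, so $f$ is numerically tame. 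This shows at once that $\mathcal{C}_P\subseteq\mathcal{C}^{nt}$ for every numerically tame $P$, and that every object $f$ of $\mathcal{C}^{nt}$ already lies in $\mathcal{C}_{B_f}$ with $B_f$ numerically tame.

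Next I would verify that the index set is filtered and that the $\mathcal{C}_P$ form an exhaustive system of full Galois subcategories. Given numerically tame $P_1,P_2$ with branch locus outside $X^o$, their pointwise compositum $P_3$ (with $P_3(x)$ the compositum of $P_1(x)$ and $P_2(x)$ inside a separable closure of $\scrK_{X,x}$) is again numerically tame, since a compositum of tamely ramified extensions is tamely ramified, and $\BL(P_3)\subset X\setminus X^o$; moreover $P_1,P_2\le P_3$. For $P\le P'$ a covering bounded by $P$ is bounded by $P'$, so $\mathcal{C}_P\subseteq\mathcal{C}_{P'}$ is a full subcategory closed under the Galois operations and the inclusion induces a surjection $\pi_1^{et}(X,P')\twoheadrightarrow\pi_1^{et}(X,P)$. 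By the previous paragraph $\bigcup_P\mathcal{C}_P=\mathcal{C}^{nt}$. Since the fundamental group of a filtered union of Galois subcategories is the inverse limit of the fundamental groups along these surjections, we obtain $\pi_1^{et,nt}(X^o)=\varprojlim_P\pi_1^{et}(X,P)$, as claimed.

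Finally, the curve-tame statement is proved verbatim, with numerical tameness replaced by curve-tameness throughout: the required correspondence $B_f$ curve-tame $\iff$ $f$ curve-tame is exactly \cite[Proposition 9.6]{formal.orbifolds}, a compositum of curve-tame branch data is curve-tame, and the same filtered-union argument applies. I expect the main obstacle to be the two matching statements that make the abstract formalism bite: proving the numerical-tameness analogue of \cite[Proposition 9.6]{formal.orbifolds}, i.e. reading off the tameness of each $I_y$ from the extension $B_f(x)/\scrK_{X,x}$, and checking that the definition of $\pi_1^{et}(X,P)$ in \cite{Nori-pi1} genuinely agrees with the fundamental group of the category $\mathcal{C}_P$ of coverings bounded by $P$, so that the inverse-limit description is legitimate.
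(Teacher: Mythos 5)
Your proposal is correct and takes essentially the same route as the paper: the paper's proof consists exactly of the matching statement that a covering $f$ is numerically tame (resp.\ curve-tame) if and only if the branch data $B_f$ is, citing \cite[Proposition 9.6]{formal.orbifolds} for the curve-tame direction, and then defers the remaining limit argument to the proof of \cite[Theorem 5.4]{formal.orbifolds}, which is precisely the filtered-system-of-Galois-categories argument you spell out. Your write-up only makes explicit what the paper leaves implicit, namely the directedness of the tame branch data under composita and the exhaustion of the tame covering category by the subcategories of coverings bounded by a fixed tame $P$.
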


\begin{proof}
Note that $f\,:\,Y\,\longrightarrow\, X$ is a numerically-tame covering if and
only if $B_f$ is a numerically-tame branch data. The same holds for
curve-tame \cite[Proposition 9.6]{formal.orbifolds}. Rest of the argument is
similar to the proof of \cite[Theorem 5.4]{formal.orbifolds}. 
\end{proof}

We recall from \cite{Nori-pi1} the definition of vector bundles on geometric formal orbifolds 
$(X,\,P)$ (also see \cite{KP} for the case of curves). Let $(Y,\,O)\,\longrightarrow\, (X,\,P)$ be 
an \'etale $\Gamma$--Galois covering of formal orbifolds where $O$ is the trivial branch data. The category $\Vect(X,\,P)$ of vector 
bundles on $(X,\,P)$ are the $\Gamma$--equivariant vector bundles on $Y$, while morphisms 
between two vector bundles on $(X,\,P)$ are defined to be the $\Gamma$--equivariant 
homomorphisms between the corresponding $\Gamma$--bundles on $Y$. It was observed on page 302 of \cite{Nori-pi1} that the category 
$\Vect(X,\,P)$ does not depend on the choice of the covering $(Y,\,O)\,\longrightarrow\, 
(X,\,P)$. In the case of curves, this is proved in \cite[Proposition 3.6]{KP}. 

Now we further assume that $X$ is also projective over $k$ and we fix a polarization of $X$. Let ${\Vect}^s(X,\,P)$ (respectively, ${\Vect}^f(X,\,P)$) denote the full subcategory 
of $\Vect(X,\,P)$ consisting of strongly semistable (respectively, essentially finite) 
$\Gamma$--equivariant vector bundles on $Y$. Here degree and slopes are computed with respect to the pullback to $Y$ of the fixed polarization on $X$ to define semistability. The category ${\Vect}^f(X,\,P)$ is 
equipped with the usual operations of direct sum, tensor product and dual. With these
${\Vect}^f(X,\,P)$ is a 
Tannakian category. For a closed point $x\,\in \,X$ outside the support of $P$, there is a fiber functor $E\, \mapsto \, E_{|x} $ from ${\Vect}^f(X,\,P)$ to the category of $k$--vector spaces. The 
automorphism group scheme of this fiber functor is the Nori fundamental group scheme 
$\pi^N((X,\,P),\,x)$ of $(X,\,P)$. Since Nori fundamental group scheme for varieties are profinite, it follows from \cite[Thoerem 3.1]{Nori-pi1} or \cite[Proposition 3.4]{Nori-pi1} that $\pi^N((X,\,P),x)$ is a profinite group scheme. We will often drop the base point from the notation.

We say that a cover $f\,:\,Y\,\longrightarrow\, X$ is genuinely ramified if there is no 
nontrivial \'etale cover of $X$ dominated by $f$.

\begin{lemma}\label{genuinelyramifiedhom}
Let $f\,:\,Y\,\longrightarrow\, X$ be a genuinely ramified map, and let $E_1,\, E_2$ be semistable bundles
on $X$ with $\mu(E_1)\,=\, \mu(E_2)$. Then $\Hom(f^*E_1,\,f^*E_2)\,\cong\, \Hom(E_1,\,E_2)$.
\end{lemma}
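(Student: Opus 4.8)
The plan is to turn the two $\Hom$ spaces into cohomology on $X$ via adjunction and then to use the fact that genuine ramification controls the maximal semistable subsheaf of $f_*\cO_Y$. By the projection formula $f_*f^*(E_1^\vee\otimes E_2)\,\cong\,(E_1^\vee\otimes E_2)\otimes f_*\cO_Y$, so
\[
\Hom(f^*E_1,\,f^*E_2)\,\cong\,H^0(Y,\,f^*(E_1^\vee\otimes E_2))\,\cong\,\Hom(E_1,\,E_2\otimes f_*\cO_Y).
\]
The canonical inclusion $\cO_X\hookrightarrow f_*\cO_Y$ induces an inclusion $\Hom(E_1,\,E_2)\hookrightarrow\Hom(E_1,\,E_2\otimes f_*\cO_Y)$ which is precisely the pullback map $\phi\mapsto f^*\phi$, and it is injective since $f$ is faithfully flat. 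Thus everything reduces to showing that this inclusion is surjective, i.e. that every $\psi\colon E_1\to E_2\otimes f_*\cO_Y$ factors through $E_2=E_2\otimes\cO_X$.

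The key structural input, which I would isolate as a separate step, is that for a genuinely ramified $f$ the maximal semistable subsheaf $W\subseteq f_*\cO_Y$ has slope $0$ and equals $\cO_X$. First one checks that $\mu_{\max}(f_*\cO_Y)=0$: the subsheaf $\cO_X$ has slope $0$, while a subsheaf of positive slope would, after the adjunction $\Hom(A,\,f_*\cO_Y)=\Hom(f^*A,\,\cO_Y)$, produce a nonzero map from a sheaf of positive slope into $\cO_Y$, which is impossible. Next one observes that the algebra structure of $f_*\cO_Y$ restricts to $W$: the multiplication $W\otimes W\to f_*\cO_Y$ has image a quotient of a slope $0$ semistable sheaf, hence of slope $\ge 0$, and so of slope $0$, forcing it into $W$. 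Therefore $\operatorname{Spec}(W)\to X$ is an \'etale cover of $X$ dominated by $f$, and genuine ramification forces $W=\cO_X$.

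Granting this, set $Q=f_*\cO_Y/\cO_X$, so that $\mu_{\max}(Q)<0$ because $\cO_X$ is the maximal semistable subsheaf of $f_*\cO_Y$. For $\psi\colon E_1\to E_2\otimes f_*\cO_Y$, composing with the projection to $E_2\otimes Q$ gives a map out of the semistable bundle $E_1$ of slope $\mu:=\mu(E_1)=\mu(E_2)$ into a sheaf with $\mu_{\max}(E_2\otimes Q)\le\mu+\mu_{\max}(Q)<\mu$, which must vanish; hence $\psi$ factors through $E_2\otimes\cO_X=E_2$. This yields surjectivity, and with the injectivity above the isomorphism follows. The main obstacle is the structural claim $W=\cO_X$, together with the slope inequality $\mu_{\max}(E_2\otimes Q)\le\mu+\mu_{\max}(Q)$: in characteristic $p$ the latter is delicate because tensor products need not preserve semistability, so I would phrase the slope estimates in terms of strongly semistable bundles (or pass to a sufficiently high Frobenius pullback) to make them rigorous.
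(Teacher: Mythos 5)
Your overall strategy coincides with the one the paper actually relies on: the paper proves Lemma \ref{genuinelyramifiedhom} purely by citation to \cite[Lemma 4.3]{BP} and the proof of \cite[Theorem 2.5]{BDP}, and those sources argue along exactly your lines --- the projection formula identifying $\Hom(f^*E_1,\,f^*E_2)$ with $\Hom(E_1,\,E_2\otimes f_*\cO_Y)$, followed by the structural fact that genuine ramification of $f$ is equivalent to the maximal semistable subsheaf of $f_*\cO_Y$ being $\cO_X$, i.e.\ $\mu_{\max}(f_*\cO_Y/\cO_X)\,<\,0$. (One small slip: injectivity of $\Hom(E_1,E_2)\hookrightarrow\Hom(E_1,E_2\otimes f_*\cO_Y)$ should not be justified by faithful flatness of $f$, which may fail; it follows simply because $\cO_X\to f_*\cO_Y$ is injective and $E_2$ is locally free. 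Your adjunction argument for $\mu_{\max}(f_*\cO_Y)=0$ is fine even in characteristic $p$, since semistability is preserved under pullback by finite \emph{separable} morphisms via Galois averaging.)

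The genuine gap is that, in positive characteristic --- the setting of this paper --- the two slope estimates you flag at the end are not loose ends but the core content of \cite{BP}, and your suggested fix does not yet close them. First, for the claim $W=\cO_X$: the multiplication argument needs $\mu_{\min}(W\otimes W)\geq 0$, i.e.\ \emph{strong} semistability of $W$, which is not automatic for the maximal destabilizing subsheaf; and even granting that $W$ is a subalgebra, you still must show $W$ is locally free, that $\mathrm{Spec}(W)$ gives a genuine intermediate cover dominated by $f$, and that slope zero forces it to be \'etale --- in characteristic $p$, with wild ramification possible, this last step also requires care. Second, in the final vanishing, the inequality $\mu_{\max}(E_2\otimes Q)\leq \mu(E_2)+\mu_{\max}(Q)$ can fail in characteristic $p$ because neither $E_2$ nor the Harder--Narasimhan pieces of $Q$ are assumed strongly semistable; and ``pass to a high Frobenius pullback'' changes the problem rather than solving it, since $F_X^{n*}(f_*\cO_Y)$ is the pushforward of the structure sheaf of the Frobenius base change $Y_n=Y\times_{X,F_X^n}X$, which may be non-normal, so the negativity $\mu_{\max}\big(F_X^{n*}(f_*\cO_Y)/\cO_X\big)<0$ must itself be re-established (this is precisely where \cite{BP} invest most of their effort). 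In characteristic zero your argument is complete; in characteristic $p$ you have, in effect, reduced the lemma to the main theorem of \cite{BP} without proving it.
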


\begin{proof}
See \cite[p. 12844, Lemma 4.3]{BP} and the proof of Theorem 2.5 of \cite{BDP}.
\end{proof}

\begin{lemma} \label{subequivariantbundle-pullback}
Let $f\,:\,Y\,\longrightarrow\, X$ be a $G$-Galois cover, and let $E$ be a vector bundle on $X$.
Let $V$ be a $G$-equivariant subbundle of the $G$-equivariant vector bundle $f^*E$. Then there
is a natural isomorphism $V\,\cong\, f^*((f_*V)^G)$ of $G$-equivariant bundles.
\end{lemma}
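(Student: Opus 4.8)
The plan is to produce $W:=(f_*V)^G$ as a \emph{saturated} subsheaf of $E$ and to show that its pullback recovers $V$. First I would record the two facts that place $(f_*V)^G$ inside $E$. Since $f$ is a $G$-Galois cover one has $(f_*\cO_Y)^G\cong\cO_X$, and since $E$ is locally free the projection formula gives $f_*f^*E\cong E\otimes_{\cO_X}f_*\cO_Y$ with $G$ acting only on the second factor, so that $(f_*f^*E)^G\cong E$. Applying $f_*$ and then $G$-invariants to the inclusion $V\hookrightarrow f^*E$ therefore yields an inclusion $W=(f_*V)^G\hookrightarrow (f_*f^*E)^G=E$. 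The counit of the $(f^*,f_*)$-adjunction, composed with the map $f^*W\to f^*f_*V$ induced by $W\subseteq f_*V$, supplies a natural $G$-equivariant morphism $\phi\colon f^*W\to V$; by construction the composite $f^*W\xrightarrow{\phi}V\hookrightarrow f^*E$ agrees with the map $f^*W\to f^*E$ coming from $W\hookrightarrow E$. It therefore suffices to prove that $\phi$ is an isomorphism.

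Next I would verify that $W$ is saturated in $E$. Writing the $G$-equivariant sequence $0\to V\to f^*E\to Q\to 0$ with $Q$ locally free (as $V$ is a subbundle) and using that $f$ is finite, hence $f_*$ exact, one gets $0\to f_*V\to f_*f^*E\to f_*Q\to 0$. Taking $G$-invariants is only left exact in characteristic $p$, but left exactness already produces an injection $E/W\hookrightarrow (f_*Q)^G$. Since $Q$ is locally free and $f$ is finite, $f_*Q$ is torsion-free, hence so is its subsheaf $(f_*Q)^G$, and therefore $E/W$ is torsion-free; that is, $W$ is saturated in $E$.

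Finally I would compare $f^*W$ and $V$ as subsheaves of $f^*E$. Over the dense open $U\subseteq X$ on which $f$ is étale the group $G$ acts freely, so faithfully flat descent identifies $G$-equivariant bundles on $f^{-1}(U)$ with bundles on $U$ and shows that $\phi$ is an isomorphism there. Pulling $E/W$ back along $f$ (flat on the regular locus, whose complement in the normal variety $X$ has codimension at least two) keeps the quotient $f^*E/f^*W\cong f^*(E/W)$ torsion-free, so $f^*W$ is saturated in $f^*E$ as well. Two saturated subsheaves of the bundle $f^*E$ that agree on the dense open $f^{-1}(U)$ must coincide: the composite $f^*W\to f^*E\to f^*E/V$ vanishes on a dense open and maps into a torsion-free sheaf, hence vanishes, giving $f^*W\subseteq V$, and symmetrically $V\subseteq f^*W$. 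Thus $\phi$ is an isomorphism of $G$-equivariant bundles. The main obstacle is exactly the behaviour over the ramification locus, which is a divisor and so cannot be dismissed by a codimension-two argument alone; the crux is that the subbundle hypothesis forces both $V$ and $f^*W$ to be saturated, and it is this saturation—together with the torsion-freeness of $f_*Q$ that survives the merely left-exact invariants functor in characteristic $p$—that propagates the generic isomorphism across the ramification divisor.
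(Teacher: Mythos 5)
Your proposal takes essentially the same route as the paper's proof: both rest on the identities $(f_*\mathcal{O}_Y)^G=\mathcal{O}_X$ and, via the projection formula, $(f_*f^*E)^G=E$; both observe that $V$ and $f^*\bigl((f_*V)^G\bigr)$ agree over $f^{-1}(U)$ for the open $U$ where $f$ is \'etale; and both conclude by the principle that saturated subsheaves of $f^*E$ agreeing on a dense open coincide. In most places you are in fact more careful than the paper: your verification that $W=(f_*V)^G$ is saturated in $E$ (exactness of $f_*$ for the finite map $f$, left exactness of $G$-invariants even when $p$ divides $|G|$, torsion-freeness of $f_*Q$) is correct and is left implicit in the paper, and your first inclusion $f^*W\subseteq V$ (the image in the locally free quotient $f^*E/V$ is torsion, hence zero) is unconditional.

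The one step that does not hold as stated is the flatness claim feeding the reverse inclusion. You assert that $f$ is flat over the regular locus of $X$; this would follow from miracle flatness only if $Y$ were Cohen--Macaulay there, and a normal variety of dimension at least $3$ need not be CM, so flatness over $X_{\mathrm{reg}}$ can fail. What is true for free is flatness in codimension one: at a codimension-one point $x$ the local ring $\mathcal{O}_{X,x}$ is a DVR and $f_*\mathcal{O}_Y$ is torsion-free, hence free, over it. With that weaker input your argument shows $f^*W$ is saturated in $f^*E$ --- and hence that $\phi\colon f^*W\to V$ is an isomorphism --- only outside a closed subset of codimension $\geq 2$ in $Y$; since a cokernel (or torsion in $f^*W$) supported in codimension $\geq 2$ need not vanish, your symmetric saturation argument does not by itself close this last gap, for exactly the reason you flag about the ramification divisor. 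It is worth noting that the paper's own proof elides precisely this point: it simply asserts that $f^*\bigl((f_*V)^G\bigr)$ \emph{is} a subbundle of $f^*E$ before invoking the dense-open coincidence of subbundles. So your write-up matches the paper's argument and supplies strictly more justification everywhere except at this shared soft spot, where your explicit justification (flatness over the regular locus) is incorrect as stated and should be replaced either by the codimension-one statement together with an argument handling the codimension-two locus, or by a hypothesis (e.g.\ $f$ flat, as when $X$ and $Y$ are smooth) under which the saturation of $f^*W$ is immediate.
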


\begin{proof}
We have $(f_*{\mathcal O}_Y)^G \,=\, {\mathcal O}_X$. Consequently, using the projection formula,
$$
(f_*f^* E)^G \,=\, ((f_*{\mathcal O}_Y)\otimes E)^G \,=\, E\, .
$$
Therefore,
$$
f^*((f_*V)^G) \, \subset\, f^*((f_*f^*E)^G)\,=\, f^*E,
$$
and $f^*((f_*V)^G)$ is a subbundle of $f^*E$. Now, the two subbundles $V$ and $f^*((f_*V)^G)$ of
$f^*E$ coincide over the open subset $f^{-1}(U)\, \subset\, Y$, where $U\, \subset\, X$ is the open
subset over which the map $f$ is \'etale. This implies that the two subbundles $V$ and $f^*((f_*V)^G)$ of
$f^*E$ coincide over the entire $Y$.
\end{proof}

\begin{pro}\label{embeddingVect}
Let $X$ be a normal projective variety. Let $P_1 \,\ge\, P_2$ be two geometric branch data on $X$.
Then there is a fully faithful functor $$i^*\,:\,{\Vect}^s(X,\,P_2)\,\longrightarrow\, {\Vect}^s(X,\,P_1).$$
Moreover, for any object $\scrE$ in ${\Vect}^s(X,\,P_2)$ and a subobject $\scrV$ of $i^*\scrE$,
there exist an object $\scrE'$ in ${\Vect}^s(X,\,P_2)$ such that $i^*\scrE'\,\cong\, \scrV$.
\end{pro}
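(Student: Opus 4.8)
The plan is to realise $i^*$ as pullback along a finite morphism of covers, and then to establish the two assertions by an invariants computation combined with Lemma~\ref{subequivariantbundle-pullback}. Since $P_1\,\ge\, P_2$, I would first fix an \'etale Galois cover $(Y,\,O)\,\longrightarrow\,(X,\,P_2)$ with group $\Gamma_2$, and then take $(Z,\,O)\,\longrightarrow\,(X,\,P_1)$ to be the Galois closure over $X$ of the compositum (normalised fibre product) of $Y$ with some \'etale Galois cover of $(X,\,P_1)$; its branch data is the join of the two branch data, hence $\le P_1$, so $Z\,\longrightarrow\,(X,\,P_1)$ is \'etale, it is Galois over $X$ with group $\Gamma_1$, and it dominates $Y$. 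As $\Vect(X,\,P)$ is independent of the chosen cover, this gives $\Vect(X,\,P_2)\,=\,\Vect^{\Gamma_2}(Y)$ and $\Vect(X,\,P_1)\,=\,\Vect^{\Gamma_1}(Z)$, together with a surjection $\Gamma_1\,\twoheadrightarrow\,\Gamma_2$ whose kernel $H$ satisfies $Z/H\,=\,Y$, so that the induced finite morphism $g\,:\,Z\,\longrightarrow\, Y$ is the $H$-quotient, i.e. $H\,=\,\mathrm{Gal}(Z/Y)$. I then define $i^*(V)\,=\,g^*V$, endowed with the $\Gamma_1$-equivariant structure pulled back via $\Gamma_1\,\to\,\Gamma_2$. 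Since the polarization on $Z$ is the pullback of that on $Y$, and pullback along a finite morphism of normal projective varieties preserves strong semistability (equivalently $F_Z^{n*}g^*V\,=\,g^*F_Y^{n*}V$ stays semistable for all $n$), the functor $i^*$ indeed lands in $\Vect^s(X,\,P_1)$.

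For full faithfulness I would compute both $\Hom$-spaces directly. Given $V,\,W$ in $\Vect^{\Gamma_2}(Y)$, set $F\,=\,\mathcal{H}om(V,\,W)$, a $\Gamma_2$-equivariant bundle on $Y$. On the $P_2$ side $\Hom_{\Vect(X,P_2)}(V,\,W)\,=\,H^0(Y,\,F)^{\Gamma_2}$, while on the $P_1$ side $\Hom_{\Vect(X,P_1)}(g^*V,\,g^*W)\,=\,H^0(Z,\,g^*F)^{\Gamma_1}$. The projection formula gives $g_*g^*F\,\cong\, F\otimes g_*\cO_Z$; here $H$ acts trivially on $F$ (it fixes $Y$) and $(g_*\cO_Z)^H\,=\,\cO_Y$ by normality of $Y\,=\,Z/H$, so $(g_*g^*F)^H\,\cong\, F$ as $\Gamma_2$-equivariant sheaves. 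Because $(\cdot)^H$ is the kernel of a left-exact map of sheaves it commutes with $H^0$, whence $H^0(Z,\,g^*F)^H\,\cong\, H^0(Y,\,F)$ compatibly with the residual $\Gamma_2\,=\,\Gamma_1/H$ action; taking $\Gamma_2$-invariants yields $H^0(Z,\,g^*F)^{\Gamma_1}\,\cong\, H^0(Y,\,F)^{\Gamma_2}$. This isomorphism is exactly the map induced by $i^*$, so $i^*$ is fully faithful.

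For the final clause, let $\scrV\,\subset\, i^*\scrE\,=\,g^*\scrE$ be a subobject in $\Vect^s(X,\,P_1)$, that is, a $\Gamma_1$-equivariant strongly semistable subbundle. Restricting its structure to $H\,\subset\,\Gamma_1$, $\scrV$ is an $H$-equivariant subbundle of $g^*\scrE$, so Lemma~\ref{subequivariantbundle-pullback} applied to the $H$-Galois cover $g$ gives $\scrV\,\cong\, g^*\big((g_*\scrV)^H\big)$. I then set $\scrE'\,:=\,(g_*\scrV)^H$: the residual $\Gamma_1/H\,=\,\Gamma_2$ action makes it an object of $\Vect^{\Gamma_2}(Y)$, it is strongly semistable since $g^*\scrE'\,\cong\,\scrV$ is and finite pullback reflects strong semistability, and by construction $i^*\scrE'\,=\,g^*\scrE'\,\cong\,\scrV$. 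I expect the main obstacle to be the careful bookkeeping of the two equivariant structures across $g$ in the invariants computation, together with the positive-characteristic input that the finite, generically \'etale but possibly ramified, morphism $g$ both preserves and reflects strong semistability.
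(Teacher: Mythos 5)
Your proof is correct, and its skeleton matches the paper's in the setup and in the second assertion: like the paper, you reduce to a single (possibly ramified) $H$-Galois morphism $g\colon Z\longrightarrow Y$ between covers uniformizing $P_1$ and $P_2$, define $i^*$ as $g^*$ with the equivariant structure pulled back along $\Gamma_1\twoheadrightarrow\Gamma_2$, and settle the subobject claim exactly as the paper does, by restricting the $\Gamma_1$-structure to $H$, invoking Lemma \ref{subequivariantbundle-pullback}, and taking $\scrE'=(g_*\scrV)^H$ with its residual $\Gamma_2$-action. Where you genuinely diverge is full faithfulness. The paper factors $g$ as a genuinely ramified map followed by an \'etale Galois cover and computes the full, non-equivariant Hom space across the genuinely ramified part via Lemma \ref{genuinelyramifiedhom} (a deep result requiring semistable bundles of equal slope), then finishes by \'etale descent. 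You instead compute the $H$-invariant Hom directly: $\Hom(g^*V,\,g^*W)^H\,\cong\, H^0\big(Y,(\mathcal{H}om(V,W)\otimes g_*\cO_Z)^H\big)\,\cong\,\Hom(V,W)$, using the projection formula, exactness of tensoring with a locally free sheaf (so invariants, being a kernel, pass through), and $(g_*\cO_Z)^H=\cO_Y$ for the normal quotient $Y=Z/H$. This elementary descent argument needs no semistability at all, so it establishes full faithfulness on all of $\Vect(X,P_2)$, not just $\Vect^s$; it also quietly sidesteps the equal-slope hypothesis of Lemma \ref{genuinelyramifiedhom}, which the paper's route tacitly uses even though two objects of $\Vect^s(X,P_2)$ need not have equal slopes --- when $\mu(E_1)\neq\mu(E_2)$ the non-equivariant Hom spaces can genuinely grow under pullback (e.g.\ $\Hom(\cO,\cO(1))$ versus $\Hom(\cO,\cO(2))$ for a degree-two self-cover of $\mathbb{P}^1$), yet their invariants still agree, which is exactly what your computation delivers. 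What the paper's route buys in exchange is the stronger non-equivariant isomorphism across the genuinely ramified part; only the invariant version is needed for the proposition. Your supplementary checks --- that $g^*$ preserves strong semistability because $g$ is separable and commutes with Frobenius, and that finite surjective pullback reflects it --- are correct and make explicit points the paper leaves implicit.
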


\begin{proof}
For $i\,=\,1, \,2$, let $f_i\,:\,(Y_i,\,O)\,\longrightarrow\, (X,\,P_i)$ be an \'etale $G_i$-Galois cover. Replacing
$f_1$ by a dominating component of the fiber product of $f_1$ and $f_2$ we may 
assume $f_1$ dominates $f_2$.

As $f_1$ dominates $f_2$, there is a unique morphism $$f\,:\,Y_1\,\longrightarrow\, Y_2$$ such 
that $f_2 \circ f\,=\,f_1$. Let $H$ be the Galois group of the extension
$k(Y_1)/k(Y_2)$, so $G_1/H\,=\,G_2$. For a $G_2$--equivariant vector bundle $E$ on $Y_2$, note that $f^*E$ is a $G_1$--equivariant vector bundle (see \cite[Lemma 3.3]{KP}). The functor $i^*$ sends $E$ to $f^*E$.

Take $G_2$--equivariant vector bundles $E_1$ and $E_2$
on $Y_2$. We need to show that 
$$\Hom(E_1,\,E_2)^{G_2}\,\cong\,\Hom(f^*E_1,\,f^*E_2)^{G_1}.$$ For that it suffices to show that
$$\Hom(E_1,\,E_2) \,\cong\, \Hom(f^*E_1,\,f^*E_2)^H.$$

We may express $f$ as $f\,=\, h\circ g$, where $g\,:\,Y_1\,\longrightarrow\, Z$ is genuinely ramified
and $h\,:\,Z\,\longrightarrow\, Y_2$ is an \'etale $H'$--Galois cover. By Lemma 
\ref{genuinelyramifiedhom} we have $$\Hom(f^*E_1,\,f^*E_2)\,\cong\, \Hom(h^*E_1,\,h^*E_2).$$
By \'etale Galois descent,
$$\Hom(h^*E_1,\,h^*E_2)^{H'}\,\cong\,  \Hom(E_1,\,E_2).$$
Combining these we obtain that $\Hom(E_1,\,E_2) \,\cong\, \Hom(f^*E_1,\,f^*E_2)^H$.
 
For the second part of the proposition, let $E$ be a $G_2$-equivariant vector bundle on $Y_2$ 
so that $\scrE\,=\, (f_2\,:\,(Y_2,\,O)\,\longrightarrow\, (X,\,P_2),\,E)$,
and let $V$ be a $G_1$-equivariant subbundle of $f^*E$ representing $\scrV$.
Then $V$ is also an $H$-equivariant bundle. Hence, by Lemma
\ref{subequivariantbundle-pullback}, $$V\cong f^*[(f_*V)^H].$$ But
$(f_*V)^H$ is a $G_2$-equivariant subbundle of $E$.
\end{proof}

\begin{cor}
Let $P_1 \,\ge\, P_2$ be two geometric branch data on a normal projective variety $X$.
Then the natural morphism $\pi^N(X,\,P_1)\,\longrightarrow\, \pi^N(X,\,P_2)$ is surjective.
\end{cor}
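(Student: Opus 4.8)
The plan is to deduce surjectivity directly from the Tannakian dictionary, feeding Proposition \ref{embeddingVect} into the standard criterion for a tensor functor to induce a faithfully flat homomorphism of Tannaka duals. First I would check that the functor $i^*$ of Proposition \ref{embeddingVect} restricts to a $k$-linear exact tensor functor
$$i^*\,:\,\Vect^f(X,\,P_2)\,\longrightarrow\,\Vect^f(X,\,P_1)$$
compatible with the fiber functors at $x$. Since $i^*$ is pullback along $f\,:\,Y_1\,\longrightarrow\, Y_2$, it commutes with $\oplus$, $\otimes$ and dualization; hence it carries finite bundles to finite bundles and therefore essentially finite bundles to essentially finite ones, so it does land in $\Vect^f(X,\,P_1)$. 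Fixing compatible points over $x$ (where the covers are \'etale), the fiber functor $\scrE\,\mapsto\,\scrE_{|x}$ is intertwined by $i^*$ because pullback preserves fibers. Thus $i^*$ is exactly the functor inducing the homomorphism $\pi^N(X,\,P_1)\,\longrightarrow\,\pi^N(X,\,P_2)$ of the statement.

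Next I would invoke the criterion of Deligne and Milne (\emph{Tannakian Categories}, Proposition 2.21): the homomorphism of affine group schemes attached to an exact tensor functor $F$ between Tannakian categories is faithfully flat precisely when $F$ is fully faithful and every subobject of $F(\scrE)$ is isomorphic to the image under $F$ of a subobject of $\scrE$. Both hypotheses are supplied by Proposition \ref{embeddingVect}: full faithfulness is its first assertion, and the lifting of subobjects is its second assertion. Since faithful flatness of a homomorphism of (profinite) affine group schemes is exactly what is meant by surjectivity, the corollary follows once the criterion applies at the level of $\Vect^f$.

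The one point requiring care, and the main obstacle, is that Proposition \ref{embeddingVect} is phrased for the strongly semistable categories $\Vect^s$, whereas $\pi^N(X,\,P)$ is the Tannaka dual of the essentially finite subcategory $\Vect^f$. I must therefore verify that the lifted subobject $\scrE'\,=\,(f_*V)^H$ produced in Proposition \ref{embeddingVect} actually lies in $\Vect^f(X,\,P_2)$ whenever $\scrE$ does. This is where Nori's characterization enters: an essentially finite bundle is a subquotient of a finite bundle, and this class is stable under passage to subobjects inside the slope-zero semistable category. Consequently a $\Gamma$-equivariant subobject of an essentially finite bundle is again essentially finite, so $\scrE'$ is essentially finite and the full-faithfulness and subobject-lifting hypotheses hold within $\Vect^f$, exactly as the criterion demands.
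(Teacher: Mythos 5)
Your proof is correct and follows the paper's own route: it applies the Deligne--Milne criterion \cite[Proposition 2.21]{deligne-milne} to the pullback functor $i^*$ of Proposition \ref{embeddingVect}, after checking that $i^*$ carries $\Vect^f(X,\,P_2)$ into $\Vect^f(X,\,P_1)$. If anything, you are slightly more careful than the paper, which leaves implicit your final point that the lifted subobject $\scrE'\,=\,(f_*V)^H$ is itself essentially finite --- which holds, as you say, because it is a subobject of the essentially finite $\scrE$ in the slope-zero semistable category.
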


\begin{proof}
In view of \cite[Proposition 2.21]{deligne-milne} and Proposition \ref{embeddingVect}, we only have to note that when $\scrE$ is in the subcategory $\Vect^f(X,\,P_2)$ then $i^*\scrE$ is in $\Vect^f(X,\,P_1)$. This is clear, since $i^*\scrE$ is given by the $G_1$--equivariant bundle $f^*E$ in the proof of Proposition \ref{embeddingVect} and $E$ essentially finite implies $f^*E$ is essentially finite.
\end{proof}

\begin{definition}
The \emph{numerically tame Nori fundamental group} $\pi^{N,nt}(X^o)$ of $X^o$ is
defined to be the projective limit of $\pi^N(X,\,P)$, where the limit is over all the geometric
branch data $P$ on $X$ whose branch locus is outside $X^o$ and $P$ is numerically tame.
Similarly, \emph{curve-tame Nori fundamental group} $\pi^{N,ct}(X^o)$ of $X^o$ is
the limit over the geometric branch data $P$ which are curve-tame.
\end{definition}

\begin{remark}
 Note that in \cite[Definition 3.3]{Nori-pi1} we introduced the notion of the group scheme $\pi^n(X^o)$ as being the projective limit of $\pi^N(X,\,P)$, where the limit is over all the branch data $P$ on $X$ whose branch locus is outside $X^o$. Hence, $\pi^{N,nt}(X^o)$ and $\pi^{N,ct}(X^o)$ are quotients of $\pi^n(X^o)$.
\end{remark}

\begin{definition}\label{dfnt}
Let $X$ be a smooth projective variety and $D$ a divisor on $X$. Let
$${\Vect}^{f,nt}(X,\,D)$$ be the category defined as follows:
\begin{itemize}
\item The objects are pairs $(f\,:\,Y\,\longrightarrow\, X,\,\, E)$, where $f$ is a numerically
tamely ramified Galois cover \'etale outside $D$, and $E$ is an $\Aut(Y/X)$-equivariant vector
bundle on $Y$, which is essentially finite.
 
\item Let $\scrE_1\,=\,(f_1\,:\,Y_1\,\longrightarrow\, X,\,\, E_1)$ and
$\scrE_2\,=\,(f_2\,:\,Y_2\,\longrightarrow\, X,\,\, E_2)$ be two objects, and let $f\,:\,Y\,\longrightarrow\,
X$ be a tamely ramified Galois cover of $X$ which is \'etale outside $D$ while
dominating both $f_1$ and $f_2$. Let $g_i\,:\,Y\,\longrightarrow\,Y_i$ be such that
$f_i \circ g_i\,=\,f$ for $i\,=\,1,\,2$. Then $g_1^*E_1$ and $g_2^*E_2$ are
$\Aut(Y/X)$-equivariant vector bundles over $Y$.

\item Denote by ${\rm Hom}(\scrE_1,\,\scrE_2)$ the space of $\Aut (Y/X)$-equivariant 
homomorphisms from $g_1^*E_1$ to $g_2^*E_2$. Note that by Proposition \ref{embeddingVect}, ${\rm Hom}(\scrE_1,\,
\scrE_2)$ does not depend on the choice of $Y$.

\item The tensor product is $$\scrE_1\otimes \scrE_2\,:=\,(f\,:\,Y\,\longrightarrow\, X,\,\, g_1^*E_1\otimes g_2^*E_2)$$ while the dual
is $\scrE_1^*\,=\,(f_1,\, E_1^*)$.
\end{itemize}
\end{definition}

\begin{pro}\label{2.9}
The category ${\Vect}^{f,nt}(X,\,D)$ in Definition \ref{dfnt} is Tannakian. Its Tannaka dual is
$\pi^{N,nt}(X\setminus D)$.
\end{pro}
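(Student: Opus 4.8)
The plan is to show that ${\Vect}^{f,nt}(X,\,D)$ is a neutral Tannakian category by exhibiting it as a filtered colimit (direct limit) of the categories ${\Vect}^f(X,\,P)$ over numerically tame geometric branch data, and then identifying the automorphism group scheme of the fiber functor with the inverse limit $\pi^{N,nt}(X\setminus D)$ of the corresponding Nori fundamental group schemes. The essential point is that an object $(f\colon Y\to X,\,E)$ of ${\Vect}^{f,nt}(X,\,D)$ is exactly an essentially finite $\Aut(Y/X)$--equivariant bundle, i.e. an object of ${\Vect}^f(X,\,B_f)$ where $B_f$ is the (numerically tame, geometric) branch data associated to $f$; and the Hom-spaces in Definition \ref{dfnt} are precisely the ones making the transition functors of Proposition \ref{embeddingVect} into the structure maps of the colimit.

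First I would verify that ${\Vect}^{f,nt}(X,\,D)$ is a $k$--linear abelian rigid tensor category with $\End(\mathbf{1})=k$. The tensor structure, dual, and Hom-spaces are laid out in Definition \ref{dfnt}; what needs checking is that these are well defined (independent of the dominating cover $Y$) and that kernels and cokernels exist and remain essentially finite. Independence of $Y$ is exactly Proposition \ref{embeddingVect}: given two objects one passes to a common numerically tame Galois cover dominating both, and the fully faithfulness of $i^*$ guarantees the Hom-space is intrinsic. For abelianness I would observe that any morphism of equivariant bundles pulled back to a common $Y$ has equivariant kernel, image and cokernel on $Y$; the delicate part is that these subquotients again arise from objects of the category, which is precisely the content of the second half of Proposition \ref{embeddingVect} (every equivariant subbundle of $f^*E$ descends to a subobject) together with the closure of essential finiteness under subquotients in a Tannakian category. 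Rigidity follows from the explicit dual $\scrE_1^*=(f_1,E_1^*)$. The fiber functor is $\scrE\mapsto E_{|x}$ for $x\in X^o$, which is exact, faithful, $k$--linear and tensor-compatible.

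The identification of the Tannaka dual proceeds through the colimit description. Since ${\Vect}^{f,nt}(X,\,D)=\varinjlim_P {\Vect}^f(X,\,P)$, with $P$ ranging over numerically tame geometric branch data supported on $D$ and the transition functors $i^*$ of Proposition \ref{embeddingVect} being fully faithful and exact tensor functors, I would invoke the standard fact that the Tannaka dual of a filtered colimit of Tannakian subcategories is the inverse limit of the duals. Concretely, each inclusion ${\Vect}^f(X,\,P_2)\hookrightarrow{\Vect}^f(X,\,P_1)$ induces the surjection $\pi^N(X,P_1)\twoheadrightarrow\pi^N(X,P_2)$ of the Corollary following Proposition \ref{embeddingVect}, and passing to the limit yields $\pi^{N,nt}(X\setminus D)=\varprojlim_P \pi^N(X,P)$, which by definition is the numerically tame Nori fundamental group. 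Compatibility of the fiber functors across the system (all use the same point $x\in X^o$ away from $D$) ensures the limit of group schemes is the automorphism group scheme of the global fiber functor.

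The main obstacle I anticipate is verifying that ${\Vect}^{f,nt}(X,\,D)$ is genuinely abelian rather than merely additive — specifically that the image of a morphism, computed on a common dominating cover, descends to an object of the category and remains essentially finite. This is where Proposition \ref{embeddingVect} does the heavy lifting: its second assertion says a subobject of $i^*\scrE$ is again (the image of) an object over $P_2$, so subquotients do not force one outside the filtered system, and essential finiteness of subquotients is automatic inside the ambient Tannakian category ${\Vect}^f(X,\,P)$. A secondary technical point is confirming that the family of numerically tame geometric branch data is filtered under $\ge$ (so that the colimit makes sense), which reduces to showing that the fiber product / compositum of two numerically tame covers admits a dominating numerically tame Galois cover — this follows from the stability of numerical tameness under the relevant operations recorded in \cite{formal.orbifolds}.
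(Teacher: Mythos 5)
Your proposal is correct and follows essentially the same route as the paper: both identify ${\Vect}^{f,nt}(X,\,D)$ with the filtered direct limit of the categories ${\Vect}^f(X,\,P)$ over numerically tame geometric branch data $P$ with $\BL(P)\,\subset\, D$ (using the fully faithful functors of Proposition \ref{embeddingVect}), and then conclude that its Tannaka dual is the inverse limit of the group schemes $\pi^N(X,\,P)$, which is $\pi^{N,nt}(X\setminus D)$ by definition. The paper's proof is terser, leaving implicit the verifications you spell out (abelianness via descent of subobjects, filteredness of the system of branch data, compatibility of fiber functors), so your additional detail is consistent with, and fills in, the paper's argument rather than diverging from it.
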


\begin{proof}
Note that for geometric branch data $P$ and $P'$ on $X$, with  $P\,\le\, P'$, we have a
fully faithful functor from ${\Vect}^f(X,\,P)$ to ${\Vect}^f(X,\,P')$. Now the direct
limit of ${\Vect}^f(X,\,P)$, where $P$ varies over numerically tame geometric branch data
with branch locus $\BL(P)\,\subset\, D$, is precisely ${\Vect}^{f,nt}(X,\,D)$. Hence its
Tannaka dual is the inverse limit of the Tannaka duals of ${\Vect}^f(X,\,P)$, where $P$
varies over numerically tame geometric branch data with branch locus $\BL(P)\,\subset\, D$,
which is $\pi^{N,nt}(X\setminus D)$.
\end{proof}

Recall that a vector bundle $E$ on a variety $Y$ is called Frobenius trivial or $F$-trivial if $F^{n*}E$ is trivial for some $n$ where $F:Y\longrightarrow Y$ is the Frobenius morphism.

\begin{pro}\label{F-trivial}
The full subcategory of ${\Vect}^{f,nt}(X,D)$ consisting of objects $(f,\,E)$, where $E$ is
a $F$-trivial bundle, is equivalent to the entire category ${\Vect}^{f,nt}(X,D)$.
\end{pro}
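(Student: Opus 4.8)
The plan is to exploit the fact that the subcategory in question is \emph{full}, so that only essential surjectivity needs to be checked: it suffices to show that every object $\scrE\,=\,(f\,:\,Y\,\longrightarrow\, X,\, E)$ of ${\Vect}^{f,nt}(X,D)$ is isomorphic, inside ${\Vect}^{f,nt}(X,D)$, to an object whose bundle is $F$-trivial. By the direct-limit description of the category in Proposition \ref{2.9}, the object $\scrE$ coincides with the pullback of $E$ to any admissible numerically tame Galois cover étale outside $D$ that dominates $Y$. Moreover $F$-triviality is preserved under arbitrary pullback, since the Frobenius commutes with pullback: if $F^{n*}E'\cong \cO^{\oplus r}$ and $w$ is any morphism, then $F^{n*}(w^*E')\,=\,w^*F^{n*}E'$ is again trivial. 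Hence it is enough to produce an intermediate cover of $Y$ over which $E$ becomes $F$-trivial, and then pass to a Galois closure over $X$.

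For the intermediate cover I would use the connected--étale decomposition. Since $E$ is essentially finite on $Y$, the Tannakian subcategory of ${\Vect}^f(Y)$ generated by $E$ is dual to a finite group scheme $G$, a quotient of $\pi^N(Y)$, and there is a $G$-torsor $T\,\longrightarrow\, Y$ realizing $E$ as $T\times^{G}V$. Consider the connected--étale sequence $1\,\longrightarrow\, G^0\,\longrightarrow\, G\,\longrightarrow\, G^{et}\,\longrightarrow\, 1$; as $k$ is algebraically closed, $G^{et}$ is a finite constant group $\Gamma$, and the surjection $\pi^N(Y)\,\longrightarrow\, G\,\longrightarrow\,\Gamma$ defines a finite étale Galois cover $u\,:\,\widetilde Y\,\longrightarrow\, Y$ with group $\Gamma$. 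Using the exact sequence $1\,\longrightarrow\,\pi^N(\widetilde Y)\,\longrightarrow\,\pi^N(Y)\,\longrightarrow\,\Gamma\,\longrightarrow\, 1$, one checks that $u^*E$ corresponds to a representation of the infinitesimal group scheme $G^0\,=\,\ker(G\to\Gamma)$, realized through the $G^0$-torsor $T\,\longrightarrow\,\widetilde Y$.

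The heart of the argument is that a bundle associated to a representation of an infinitesimal finite group scheme is $F$-trivial. If $G^0$ has height $n$, then the $n$-th Frobenius $F^n_{G^0}\,:\,G^0\,\longrightarrow\,(G^0)^{(p^n)}$ is the trivial homomorphism; consequently the $n$-th Frobenius pullback of the $G^0$-torsor $T\,\longrightarrow\,\widetilde Y$ is a trivial torsor, so that $F^{n*}(u^*E)\,=\,F^{n*}(T\times^{G^0}V)$ is a trivial bundle and $u^*E$ is $F$-trivial. This step, involving the bookkeeping of the Frobenius twists on both the torsor and the representation, is the main obstacle and the only place with genuine content; it may alternatively be extracted from the standard theory of the local fundamental group scheme.

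Finally I would assemble the cover. Let $Y''$ be the Galois closure of the composite $\widetilde Y\,\longrightarrow\, Y\,\longrightarrow\, X$. Since $u$ is étale and $f$ is numerically tame and étale outside $D$, the cover $\widetilde Y\,\longrightarrow\, X$ is numerically tame and étale outside $D$, and these properties pass to its Galois closure; thus $Y''\,\longrightarrow\, X$ is an admissible cover for ${\Vect}^{f,nt}(X,D)$ dominating $Y$. Let $E''$ be the pullback of $E$ (equivalently of $u^*E$) to $Y''$; it is essentially finite (pullback of an essentially finite bundle, as already noted in the proof of the Corollary), it is $\Aut(Y''/X)$-equivariant, the equivariant structure being pulled back along $\Aut(Y''/X)\,\longrightarrow\,\Aut(Y/X)$, and it is $F$-trivial as a pullback of the $F$-trivial bundle $u^*E$. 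Hence $(Y'',E'')$ is an object of the full subcategory which is isomorphic to $\scrE$ in ${\Vect}^{f,nt}(X,D)$. Essential surjectivity, and therefore the asserted equivalence, follows; the remaining cover-theoretic points (tameness and étaleness inherited by Galois closures, and equivariance under pullback) are routine given the results already recalled.
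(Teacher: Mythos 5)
Your proposal is correct and follows essentially the same route as the paper's own proof: find a finite \'etale cover $g\,:\,Y'\,\longrightarrow\, Y$ on which $E$ becomes $F$-trivial, pass to the Galois closure of $f\circ g$ (noting that admissibility --- numerical tameness and \'etaleness outside $D$ --- survives, since $g$ is \'etale), and observe that the resulting object is isomorphic to $(f,\,E)$ in ${\Vect}^{f,nt}(X,\,D)$ by the very definition of the category. The only difference is that the paper invokes without proof the standard fact that an essentially finite bundle is $F$-trivialized on some finite \'etale cover, whereas you supply that fact's standard proof via the connected--\'etale sequence of the associated finite group scheme and the height of its infinitesimal part.
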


\begin{proof}
Let $(f\,:\,Y\,\longrightarrow\, X,\,\,E)$ be an object of ${\Vect}^{f,nt}(X,\,D)$. Since $E$ is an essentially finite bundle on $Y$, there
exist a finite \'etale cover $g\,:\,Y'\,\longrightarrow\, Y$ such that $g^*E$ is $F$-trivial. Passing to the Galois
closure of $f\circ g$ we may assume $g$ is Galois and hence $g^*E$ is $\Aut(Y'/X)$-equivariant bundle. By
definition $(f\circ g\,:\,Y'\,\longrightarrow\, X,\,\, g^*E)$ is isomorphic to $(f,\,E)$ in ${\Vect}^{f,nt}(X,\,D)$. 
\end{proof}

In the same way we can define ${\Vect}^{f,ct}(X,\,D)$ and deduce that its Tannaka dual is $\pi^{N,ct}(X\setminus D)$. Moreover, Proposition \ref{F-trivial} holds with ${\Vect}^{f,nt}(X,\,D)$ replaced by ${\Vect}^{f,ct}(X,\,D)$.

\subsection{A definition using tame parabolic bundles}

Here we assume that $X$ is smooth, $k$ is an algebraically closed field of characteristic 
zero and $D\,=\,X\setminus X^o$ is a reduced effective divisor. In \cite{MY} 
parabolic bundles on curves $X$ relative to $D$ were defined. For higher dimensions this 
was extended in \cite{MY}.  In \cite{Bis} it was shown that when $k\,=\,\mathbb C$, and $D$
is a simple normal crossing divisor, the 
category of parabolic bundles is equivalent to the category of orbifold bundles (see
also \cite{Bo1}, \cite{Bo2}).

When $k$ is of positive characteristic, and $X$ is a curve, the orbifold bundles were 
studied in \cite{KP}. In that case, parabolic bundles were defined in \cite{KM} and it was 
shown that the two categories are equivalent. Though the structure of parabolic bundles 
is complicated due to the wild ramifications, the objects in the subcategory of tame 
parabolic bundles which correspond to the tame orbifold bundles are
considerably simpler. In fact, the
tame parabolic bundles are same as the parabolic bundles whose parabolic
weights have denominators prime to the characteristic $p$.

Let $X$ be a smooth projective variety and
$$
D\, \subset\, X
$$
a reduced effective divisor. Let
$$
D\, =\, \sum_{i=1}^\ell D_i
$$
be the decomposition of $D$ into its irreducible components.

Take a vector bundle $E$ on $X$. A quasiparabolic structure on $E$ is a filtration
of subbundles
\begin{equation}\label{e1}
E\big\vert_{D_i}\,=\, F^i_1\, \supset\, F^i_2 \, \supset\, \cdots \, \supset\,
F^i_{n_i-1} \, \supset\, F^i_{n_i} \, \supset\,F^i_{n_i+1}\,=\,0
\end{equation}
for every $1\, \leq\, i\, \leq\, \ell$. A system of parabolic weights for such a
quasiparabolic structure consists of rational numbers
\begin{equation}\label{e1a}
0\,\leq\, \alpha^i_1\, <\, \alpha^i_2\, <\, \cdots \, <\, \alpha^i_{n_i-1}\, <\, \alpha^i_{n_i}
\, < \, 1
\end{equation}
for every $1\, \leq\, i\, \leq\, \ell$ satisfying the condition that there is an integer
$N\, \geq\, 1$ prime to $p$ for which $N\alpha^i_j\, \in\, \mathbb Z$ for all $i,\,j$.

A parabolic structure on $E$ is a quasiparabolic structure as above together with a system of
parabolic weights. Take a parabolic structure on $E$. Let
$\{E_t\}_{t\in \mathbb R}$ be the corresponding filtration of sheaves (see \cite{MY}). We impose
the following condition on the parabolic structure:

Each sheaf $E_t$, $t\, \in\, \mathbb R$, is locally free.

A tame parabolic structure on $E$ is a parabolic structure on $E$ satisfying the above condition.

Tensor product and dual of parabolic bundles are defined in standard way (see, for 
example, \cite{Bi2}, \cite{Yo}). Let $E_*$ and $F_*$ be parabolic bundles. Then all the 
parabolic weights of the parabolic tensor product $E_*\otimes F_*$ at a parabolic divisor 
$D$ are of the form $\alpha+\beta - [\alpha+\beta]$, where $\alpha$ 
(respectively, $\beta$) is a parabolic weight of $E_*$ (respectively, $F_*$) at $D$; the 
notation $[t]\, \in\, {\mathbb Z}$ stands for the integral part of $t\, \in\, \mathbb 
R$, so $0\, \leq\, t-[t] \, <\,1$. From this it follows immediately that
the parabolic tensor product of two tame parabolic bundles is again a tame parabolic bundle.
Next note that all the parabolic weights of the parabolic dual $E^*_*$ at a parabolic divisor $D$ are 
of the form $-\alpha -[-\alpha]$, where $\alpha$ is a parabolic weight of $E_*$ at $D$. This implies
that the parabolic dual of a tame parabolic bundle is again a tame parabolic bundle.

For any polynomial $f\,=\, \sum_{i=0}^n a_i x^i \, \in\, {\mathbb Z}[X]$ with $a_i\, \geq\, 0$,
and any tame parabolic bundle $E_*$, define
$$
f(E_*)\,:=\, \bigoplus_{i=0}^n (E^{\otimes i}_*)^{\oplus a_i}\, ,
$$
where $E^{\otimes 0}_*$ is the trivial parabolic line bundle (trivial line bundle with no nonzero
parabolic weight).

A tame parabolic vector bundle $E_*$ is called \textit{finite} if there are two distinct
polynomials $f_1$ and $f_2$ as above such that the two parabolic vector bundle
$f_1(E_*)$ and $f_2(E_*)$ are isomorphic. If $E_*$ and $F_*$ are finite tame parabolic vector bundles,
then $E_*\otimes F_*$, $E_*\oplus F_*$ and $E^*_*$ are also finite tame parabolic vector bundles.

\begin{pro}\label{2.11a}
Any finite tame parabolic vector bundle $E_*$ is semistable, and the parabolic degree of
$E_*$ is zero.
\end{pro}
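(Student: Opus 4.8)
The plan is to imitate Nori's proof that a finite vector bundle is semistable of degree zero, carried out with parabolic slopes in place of ordinary ones. Write $r$ for the rank of $E_*$ and let $\mu(E_*)$ denote its parabolic slope, that is, the parabolic degree divided by $r$. The two elementary inputs I would record first are that the parabolic degree is additive over direct sums and that the parabolic slope is additive under the parabolic tensor product, $\mu(A_*\otimes B_*)\,=\,\mu(A_*)+\mu(B_*)$ (this is immediate from the formula for the parabolic degree of a tensor product; see \cite{Bi2}, \cite{Yo}). In particular $\mu(E_*^{\otimes n})\,=\,n\,\mu(E_*)$ for every $n\ge 0$.

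The key structural step, and the main obstacle, is to convert the polynomial identity defining finiteness into a boundedness statement. The category of tame parabolic bundles on $(X,\,D)$ satisfies the Krull--Schmidt property: every object decomposes as a finite direct sum of indecomposables, uniquely up to isomorphism. Granting this, Nori's combinatorial argument \cite{No} applies unchanged: from the existence of distinct polynomials with $f_1(E_*)\,\cong\, f_2(E_*)$ one deduces that the set $\mathcal S$ of isomorphism classes of indecomposable tame parabolic bundles occurring as a direct summand of some tensor power $E_*^{\otimes n}$ ($n\ge 0$) is finite. Verifying Krull--Schmidt in the tame parabolic category (for instance by passing to equivariant bundles on a ramified cover via \cite{Bis}, where unique decomposition of coherent sheaves on a projective variety is classical) and checking that Nori's lemma transports to this setting are the points that require care; the remaining computations are then routine.

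Once $\mathcal S$ is known to be finite, both assertions follow formally. There is a constant $C$ such that every $V_*\in\mathcal S$ satisfies $|\mu(V_*)|\le C$ and $\mu_{\max}(V_*)\le C$, where $\mu_{\max}$ denotes the parabolic slope of a maximal destabilizing parabolic subbundle. Since each $E_*^{\otimes n}$ is a direct sum of members of $\mathcal S$, its parabolic slope is a weighted average of their slopes, so $|\mu(E_*^{\otimes n})|\le C$; comparing with $\mu(E_*^{\otimes n})\,=\,n\,\mu(E_*)$ gives $n\,|\mu(E_*)|\le C$ for all $n$, forcing $\mu(E_*)\,=\,0$ and hence parabolic degree zero. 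For semistability, let $W_*\subseteq E_*$ be a maximal destabilizing parabolic subbundle, so $\mu(W_*)\,=\,\mu_{\max}(E_*)$. Then $W_*^{\otimes n}$ is a parabolic subsheaf of $E_*^{\otimes n}$ of parabolic slope $n\,\mu(W_*)$, whence $\mu_{\max}(E_*^{\otimes n})\,\ge\, n\,\mu_{\max}(E_*)$; on the other hand, since $\mu_{\max}$ of a direct sum is the maximum of the $\mu_{\max}$ of its summands, $\mu_{\max}(E_*^{\otimes n})\le C$ independently of $n$. If $E_*$ were unstable we would have $\mu_{\max}(E_*)\,>\,\mu(E_*)\,=\,0$, making $n\,\mu_{\max}(E_*)$ unbounded, a contradiction; hence $\mu_{\max}(E_*)\le 0\,=\,\mu(E_*)$ and $E_*$ is semistable. (Alternatively, one may invoke the characteristic-zero theorem that the parabolic tensor product of semistable parabolic bundles is again semistable to obtain the additivity $\mu_{\max}(E_*^{\otimes n})\,=\,n\,\mu_{\max}(E_*)$ directly.)
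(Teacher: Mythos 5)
Your proposal is correct and takes essentially the same route as the paper: the paper's proof consists of quoting Nori's characterization of finiteness via finitely many indecomposable summands of the tensor powers (\cite[p.~80, Lemma 3.1]{No}, which is exactly your Krull--Schmidt plus combinatorics step, transported to the parabolic category) and then deducing degree zero and semistability by the slope-boundedness argument of \cite[p.~6, Lemma 3.3]{Bi3}, which is precisely your final paragraph. The only difference is that you spell out in full the details the paper delegates to these two citations.
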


\begin{proof}
A tame parabolic vector bundle $E_*$ is finite if and only if there are finitely many tame parabolic
vector bundles $V_{1,*},\, \cdots,\, V_{m,*}$ such that for every $j\, \geq\, 1$, we have
$$
E^{\otimes j}_*\ =\ \bigoplus_{i=1}^m (V_{i,*})^{\oplus n^j_i}
$$
for nonnegative integers $n^j_i$, where the tensor product and direct sum are in the parabolic category;
see \cite[p.~80, Lemma 3.1]{No} for a proof. Using this it is straightforward to deduce the proposition;
see \cite[p.~6, Lemma 3.3]{Bi3}.
\end{proof}

{}From \ref{2.11a} it follows that the abelian category generated by the finite tame parabolic vector
bundles consists of tame parabolic vector bundles (note that any parabolic
subbundle of a tame parabolic bundle is tame).

An \textit{essentially finite} tame parabolic bundle is a parabolic bundle lying in the abelian category
generated by the finite tame parabolic vector bundles.

\begin{pro}\label{2.11}
The category of essentially finite tame parabolic vector bundles with respect to $D$ form a
Tannakian category. 
\end{pro}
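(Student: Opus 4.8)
The plan is to verify the axioms of a neutral Tannakian category for the category of essentially finite tame parabolic bundles on $(X,\,D)$, exhibiting a tensor structure, a fiber functor, duals, and the abelian structure. The main structural input is already in place: by Proposition \ref{2.11a} every finite tame parabolic bundle is semistable of parabolic degree zero, and the class of tame parabolic bundles is closed under parabolic tensor product and parabolic dual (as recorded before Proposition \ref{2.11a}). So the abelian category $\mathcal{C}$ generated by the finite tame parabolic bundles consists of tame parabolic bundles, and it is a full subcategory of the category of semistable parabolic bundles of parabolic degree zero.

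First I would make the tensor structure explicit. The parabolic tensor product and parabolic dual are already defined and shown to preserve tameness; I would note that they also preserve parabolic semistability and the vanishing of parabolic degree, so $\mathcal{C}$ is closed under $\otimes$, $\oplus$, and $(-)^*$. The trivial parabolic line bundle $\cO_X$ (with no nonzero weight) serves as the unit object, and $\Hom_{\mathcal{C}}(\cO_X,\,\cO_X)\,=\,k$ since $X$ is projective and connected. Associativity, commutativity, and the unit constraints for $\otimes$ are inherited from the standard parabolic tensor formalism (\cite{Yo}, \cite{Bi2}), so $\mathcal{C}$ is a rigid abelian tensor category over $k$.

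Next I would produce the fiber functor. Fix a closed point $x\,\in\, X^o\,=\,X\setminus D$; since the parabolic structure lives only along $D$, the underlying bundle $E$ of a parabolic bundle $E_*$ is honestly locally free near $x$, so $E_*\,\mapsto\, E_{|x}$ is a well-defined exact $k$-linear tensor functor from $\mathcal{C}$ to finite-dimensional $k$-vector spaces, compatible with the tensor structures because taking fibers at a point away from $D$ is compatible with parabolic $\otimes$ and $(-)^*$ there. Exactness of the fiber functor is the key point where I would invoke that $\mathcal{C}$ consists of semistable parabolic bundles of parabolic degree zero: a short exact sequence in $\mathcal{C}$ is a sequence of such bundles, the underlying bundles are locally free near $x$, and restricting to $x$ is exact; faithfulness follows because the rank of $E_*$ equals $\dim_k E_{|x}$ and a nonzero object has positive rank.

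The main obstacle I expect is the abelian structure, specifically showing that $\mathcal{C}$ is genuinely abelian inside the category of all parabolic sheaves, i.e.\ that kernels and cokernels of morphisms between essentially finite tame parabolic bundles exist within the category and are again essentially finite tame parabolic bundles. This rests on the fact that parabolic morphisms between semistable parabolic bundles of the same parabolic slope (here, slope zero) have semistable kernels, images, and cokernels of the same slope, together with the observation recorded after Proposition \ref{2.11a} that parabolic subbundles of tame parabolic bundles are tame. I would argue that since all objects have parabolic degree zero and parabolic slope zero, any morphism is strict with respect to the parabolic filtrations, so its kernel and cokernel are again parabolic degree zero semistable tame bundles lying in the abelian category generated by the finite objects. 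Once this is settled, $\mathcal{C}$ is a neutral Tannakian category with fiber functor $E_*\,\mapsto\, E_{|x}$, which is exactly the assertion of the proposition.
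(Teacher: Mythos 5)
Your outline follows the same Nori-style route as the paper (abelian category generated by the finite objects, rigid tensor structure, fiber functor $E_*\,\longmapsto\, E_{|x}$ at a point $x\,\notin\, D$), but it has a genuine gap at exactly the step the paper's proof is about. Everything hinges on the lemma that a nonzero parabolic homomorphism $\varphi\,:\, E_*\,\longrightarrow\, E'_*$ between semistable parabolic bundles of parabolic degree zero cannot vanish at any point --- equivalently, that $\varphi$ has constant rank, so that its sheaf-theoretic image is a parabolic subbundle and kernels and cokernels in the category are represented by locally free sheaves. The paper proves this (on a complete curve) by the degree argument: the image of $\varphi$ is simultaneously a quotient of $E_*$ and a subsheaf of $E'_*$, hence has parabolic degree zero; if $\varphi$ were nonzero but vanished at a point, it would generate a parabolic subsheaf of $E'_*$ of strictly larger parabolic degree, contradicting semistability of $E'_*$ at degree zero. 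In your proposal this statement is asserted, not proved: you write that ``any morphism is strict with respect to the parabolic filtrations'' and offer as justification only that all objects have parabolic slope zero --- which is the hypothesis of the needed lemma, not an argument for it. Similarly, your claim that the fiber functor is exact ``because restricting to $x$ is exact'' presupposes that short exact sequences in $\mathcal{C}$ are short exact sequences of underlying vector bundles; a priori the categorical image and cokernel in the semistable slope-zero category are obtained by saturation of the sheaf image, and without the constant-rank lemma the rank of $\varphi$ could jump at $x$, making $E_*\,\longmapsto\, E_{|x}$ neither exact nor faithful.

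The ``fact'' you cite --- that morphisms between semistable parabolic bundles of the same slope have semistable kernels, images and cokernels of that slope --- is true but weaker than what you need: it identifies the categorical image only up to saturation, and in dimension greater than one the degree computation controls the image only in codimension one, so it says nothing about behaviour at the single point $x$. To close the gap one must prove the vanishing-at-a-point lemma, on curves by the degree computation above and in higher dimension by restricting to a suitable complete curve through the given point, exactly as in Nori's original argument \cite{No} which the paper invokes. Once that lemma is in place, the remainder of your outline (closure of tameness under parabolic $\otimes$ and duals, the unit object, $\Hom(\mathcal{O}_X,\mathcal{O}_X)\,=\,k$, faithfulness via rank) does go through and agrees with the paper's proof.
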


\begin{proof}
As noted above, the essentially finite tame parabolic vector bundles with respect to $D$ form an
abelian category. Next we note that this is a tensor category. Let $E_*$ and $E'_*$ be two semistable
parabolic vector bundles of parabolic degree zero on a complete curve. If a parabolic homomorphism 
$\varphi\, :\, E_*\, \longrightarrow\, E'_*$ vanishes at a point, then it can be shown that $\varphi\,=\, 0$.
Indeed, this follows by considering the image of $\varphi$. Its parabolic degree is zero because it is
simultaneously a quotient of $E_*$ and a subsheaf of $E'_*$. On the other hand, if $\varphi$ is not identically
zero but vanishes at a point, then it generates a parabolic subsheaf of $E'_*$ of strictly larger parabolic
degree (than itself). But this contradicts the given condition that $E'_*$ is parabolic semistable of parabolic
degree zero. Using this it follows that the essentially finite tame parabolic vector bundles with respect
to $D$ form a rigid tensor category.
See the proof in \cite{No} that the essentially finite vector bundles form a
Tannakian category for very similar arguments.
\end{proof}

Let $\pi^{N,pt}(X^o)$ denote the Tannakian dual of this category of essentially finite 
tame parabolic bundles with respect to $D$. This $\pi^{N,pt}(X^o)$ is the third candidate 
for the Nori fundamental group scheme.

A tame orbifold bundle on $X$ relative to $D$ is a vector bundle on a formal orbifold 
$(X,\,P)$ for some tame branch data $P$ with $\BL(P)$ contained in $D$. When $D$ is a strict 
normal crossing divisor, extending the result of \cite{Bis} we will show that the category 
of tame orbifold bundles on $X$ relative to $D$ is equivalent to the category of tame 
parabolic bundles on $X$ relative to $D$.

\subsection{A definition using linearly reductive group scheme torsors}

One definition which is attributed to Borne and Vistoli (Definition 10.4 of \cite{BV}) is the maximal pro linearly
reductive quotient of $\pi^N(X^o)$ using the tame Nori fundamental gerbe (also see Definition 3.9 and
Remark 3.10 of \cite{Otabe}). Let us denote this
group scheme by $\pi^{tame}(X^o)$. It behaves well with respect to the
extension of base field. It surjects onto the prime-to-$p$ part of the \'etale fundamental group of $X^o$.

It is natural to pose the following question.

\begin{question}
What is the relation between $\pi^{tame}(X^o)$ and $\pi^{N,nt}(X^o)$?
\end{question}

\section{Relationship between $\pi^{N,nt}(X^o)$, $\pi^{N,ct}(X^o)$ and $\pi^{N,pt}(X^o)$}

Let us first assume $X^o$ is a curve. Denote by $X$ its smooth compactification, and set 
$D\,=\,X\setminus X^o$. In this case $\pi^{N,nt}(X^o)$ and $\pi^{N,ct}(X^o)$ are evidently the 
same. There is an equivalence of categories between the category of tame orbifold bundles on 
$X$ with orbifold structure along $D$ and the category of tame parabolic bundles on $X$ with 
filtration along $D$ (see \cite[Proposition 5.15]{KM}). This induces an equivalence between the 
category ${\Vect}^{f}(X,\,D)$ of essentially finite tame orbifold bundles on $X$ with orbifold 
structure along $D$ and the category $\PVect^{f}(X,\,D)$ of essentially finite tame parabolic bundles on $X$ with
filtration along $D$. Consequently, this equivalence of categories
gives an isomorphism between $\pi^{N,nt}(X^o)$ and 
$\pi^{N,pt}(X^o)$, because the former is the Tannaka dual of ${\Vect}^{f}(X,\,D)$ and the 
latter is the Tannaka dual of $\PVect^{f}(X,\,D)$.

\begin{pro}\label{3.1}
Let $X$ be a smooth projective variety over $k$ and $D\, \subset\, X$ a reduced divisor
on $X$. Let $X^o\,:=\,X\setminus D$ be the complement of it. Then there is an epimorphism
$\pi^{N,ct}(X^o)\,\longrightarrow\, \pi^{N,nt}(X^o)$.
\end{pro}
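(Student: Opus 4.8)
The plan is to deduce the epimorphism from the Tannakian descriptions of the two group schemes, exactly in the manner of the Corollary following Proposition \ref{embeddingVect}. The crucial input is that numerical tameness is the stronger condition: if a geometric branch data $P$ on $X$ is numerically tame, so that $P(x)/\scrK_{X,x}$ is tamely ramified at every point $x$ of codimension at least one, then $P$ is curve-tame. Indeed, for a coheight-one prime $\scrp$ of $\widehat{\cO_{X,x}}$ lying in $\BL(P)$ and a prime $\scrq$ of $\cO_{P(x)}$ above it, the extension $S/R$ of the normalizations is induced from the tamely ramified extension $P(x)/\scrK_{X,x}$ by specializing along $\scrp$, and such an extension remains tame; I would invoke this implication from \cite{KS-tame} (or its branch-data incarnation in \cite{formal.orbifolds}) rather than reprove it. Hence the geometric numerically tame branch data with branch locus in $D$ form a subcollection of the geometric curve-tame ones.

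First I would translate this inclusion into a full embedding of Tannakian categories. By Proposition \ref{2.9}, the category $\Vect^{f,nt}(X,\,D)$ is the direct limit of $\Vect^f(X,\,P)$ over numerically tame geometric $P$ with $\BL(P)\subset D$, and it is the Tannaka dual of $\pi^{N,nt}(X^o)$; the same holds verbatim for $\Vect^{f,ct}(X,\,D)$ and $\pi^{N,ct}(X^o)$. Because the numerically tame $P$ are a subcollection of the curve-tame $P$, and because the Hom-spaces in each $\Vect^f(X,\,P)$ are independent of the chosen covering by Proposition \ref{embeddingVect}, passing to direct limits realizes $\Vect^{f,nt}(X,\,D)$ as a full Tannakian subcategory of $\Vect^{f,ct}(X,\,D)$; write $\iota$ for the inclusion.

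It then remains to verify the hypotheses of \cite[Proposition 2.21]{deligne-milne} guaranteeing that the homomorphism of Tannaka duals induced by $\iota$ is faithfully flat. Full faithfulness of $\iota$ is automatic; the substantive point is that every subobject $\scrV$ of $\iota(\scrE)$, for $\scrE$ in $\Vect^{f,nt}(X,\,D)$, descends to a subobject of $\scrE$ already living in $\Vect^{f,nt}(X,\,D)$. This is precisely the second part of Proposition \ref{embeddingVect}: choosing dominating geometric branch data $P_1\,\ge\, P_2$ with $P_2$ numerically tame and $P_1$ curve-tame, the $H$-equivariant descent $(f_*V)^H$ supplies the required subobject, and it is essentially finite because $\scrV$ is (being a subobject in a Tannakian category) and the underlying cover is unchanged. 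Granting the first paragraph, \cite[Proposition 2.21]{deligne-milne} yields the desired epimorphism $\pi^{N,ct}(X^o)\,\longrightarrow\, \pi^{N,nt}(X^o)$. The main obstacle is precisely that first step — the implication that numerical tameness forces curve-tameness — while the remainder is a formal consequence of the Tannakian machinery already assembled in this section.
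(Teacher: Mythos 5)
Your proposal is correct and takes essentially the same route as the paper: the paper's proof also rests on the single substantive input that geometric numerically tame branch data are curve-tame (via \cite[Theorems 5.3 and 5.4]{KS-tame} and \cite[Proposition 9.7]{formal.orbifolds}), and then deduces the surjection $\pi^{N,ct}(X^o)\,\longrightarrow\,\pi^{N,nt}(X^o)$ from the limit/Tannakian formalism of Section 2. Your write-up merely makes explicit the appeal to Proposition \ref{embeddingVect}, Proposition \ref{2.9} and \cite[Proposition 2.21]{deligne-milne} that the paper compresses into its concluding ``hence''.
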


\begin{proof}
Note that if $Y\,\longrightarrow\, X$ is a covering, \'etale over $X^o$, such that the 
ramification along the divisor $D\,=\,X\setminus X^o$ is numerically tame, then the 
pull-back of this covering via a nonconstant morphism $C\,\longrightarrow\, X$, where $C$ 
is smooth connected projective curve, is tamely ramified \cite[Theorem 5.3 and Theorem 
5.4]{KS-tame}. This implies that if $P$ is geometric numerically tame branch data, then it 
is curve-tame as well \cite[Proposition 9.7]{formal.orbifolds}. Hence there is a 
surjection $\pi^{N,ct}(X^o)\,\longrightarrow\, \pi^{N,nt}(X^o)$.
\end{proof}

In higher dimensions when $D$ is a strict normal crossing divisor, we first note that 
$\pi^{N,ct}(X^o)\,\longrightarrow\, \pi^{N,nt}(X^o)$ is an isomorphism. This is because a 
curve-tame cover branched along a strict normal crossing divisor is numerically tame (\cite[Theorem 1.1 and 1.2]{KS-tame}).  Note that in the appendix of \cite{KS-tame} it is also shown that curve-tame covers need not be numerically tame in general. Hence the map $\pi^{N,ct}(X^o)\,\longrightarrow\, \pi^{N,nt}(X^o)$ is not necessarily an isomorphism if $D$ is not a strict normal crossing divisor.

We will show that $\pi^{N,nt}(X^o)$ and $\pi^{N,pt}(X^o)$ are isomorphic. 
It actually follows from Theorem \ref{orb-par-corr}.

\begin{thm}\label{orb-par-corr}
Assume that $D$ is a strict normal crossing divisor.
The category of tame parabolic bundles on $X$ relative to $D$ is equivalent
to the category of tame orbifold bundles on $X$ relative to $D$. Moreover, this equivalence holds even when restricted to the subcategory of essentially finite objects in the two categories.
\end{thm}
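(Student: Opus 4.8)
The plan is to build the equivalence locally on Kummer covers, glue it using the strict normal crossing hypothesis, and then deduce the statement about essentially finite objects formally from the fact that the resulting functor is an equivalence of abelian tensor categories. Concretely, I will construct two mutually inverse tensor functors and check that their composites are naturally isomorphic to the identities. Fix a tame branch data $P$ with $\BL(P)\subseteq D$ and an \'etale $\Gamma$--Galois cover $(Y,\,O)\,\longrightarrow\,(X,\,P)$; recall that a tame orbifold bundle is a $\Gamma$--equivariant bundle $V$ on $Y$, where I write $f\,:\,Y\,\longrightarrow\, X$ for the associated ramified cover, tamely ramified along $D$ and \'etale over $X^o$. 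To pass to the parabolic side I would set
$$
E_t\ :=\ \bigl(f_*(V\otimes\mathcal O_Y(D_t))\bigr)^{\Gamma},\qquad D_t=\sum_i\lfloor N_it_i\rfloor\,\widetilde D_i,
$$
where $N_i$ is the ramification index of $f$ along $D_i$ and $\widetilde D_i=(f^*D_i)_{\mathrm{red}}$. The filtration $\{E_t\}$ together with its jumps defines a quasiparabolic structure on $E:=E_0$ whose weights are read off from the characters of the local inertia action on $V$. The crucial point is that, $P$ being tame, each inertia group has order prime to $p$, so its group algebra is semisimple and $V$ splits into isotypic components along every $D_i$; this is exactly what guarantees that the sheaves $E_t$ are locally free (tameness of the parabolic structure) and that the weights have denominators prime to $p$. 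This is the one place where tameness is indispensable, and it is what makes the argument characteristic--free.

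Conversely, given a tame parabolic bundle with common weight denominator $N$ prime to $p$, I would take the Kawamata--Kummer cover $f\,:\,Y\,\longrightarrow\, X$ totally ramified of index $N$ along each $D_i$ (\'etale over $X^o$, tamely ramified, with Galois group a product of copies of $\mu_N$) and reconstruct the $\Gamma$--equivariant bundle from the filtered pieces $\{E_t\}$ exactly as in \cite{Bis} for $k=\mathbb C$ and in \cite{KM}, \cite{KP} in the curve case. The two constructions are inverse to each other locally: since $D$ is strict normal crossing and the cover is tame, Abhyankar's lemma shows that \'etale--locally $X$ is an affine space with $D$ a union of coordinate hyperplanes and $f$ is a product of one--variable Kummer covers $w_i^{N_i}=z_i$ with $N_i$ prime to $p$, for which the correspondence is the classical eigenspace decomposition of a $\prod\mu_{N_i}$--equivariant bundle. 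The main obstacle is the gluing: I must verify that the locally defined isotypic filtrations patch to a global quasiparabolic structure and that the two functors are naturally isomorphic to the respective identities, which amounts to a compatibility check along the multiple intersections $D_{i_1}\cap\cdots\cap D_{i_r}$. This generalises Biswas's analytic argument in \cite{Bis} to arbitrary characteristic by replacing analytic local models with \'etale--local Kummer models, the strict normal crossing hypothesis being used precisely to guarantee that such local product models exist.

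Finally I would check that these functors are tensor functors and then conclude the statement about finite objects formally. On the parabolic side the tensor product reduces weights modulo $\mathbb Z$ via $\alpha+\beta-[\alpha+\beta]$, which is exactly the multiplication of the corresponding inertia characters on the equivariant tensor product, while the parabolic dual, with weights $-\alpha-[-\alpha]$, matches the equivariant dual; hence the equivalence respects $\otimes$, $\oplus$ and duals. It therefore intertwines the operations $E_*\mapsto\bigoplus_i(E^{\otimes i}_*)^{\oplus a_i}$ on the two sides, so a tame parabolic bundle is finite if and only if the corresponding orbifold bundle is finite. Being an exact equivalence of abelian tensor categories, it identifies the abelian subcategory generated by the finite objects on each side; since these are by definition the essentially finite tame parabolic bundles on one side and, by Nori's description of $\Vect^f(X,\,P)$, the essentially finite tame orbifold bundles on the other, the equivalence restricts to the desired equivalence of essentially finite objects, which is the second assertion.
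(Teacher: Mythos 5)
Your proposal is correct and follows essentially the same route as the paper: the paper's proof simply cites \cite{Bis}, \cite{Bo1}, \cite{Bo2} and observes that those correspondence constructions (invariant pushforward with Kummer twists, and the Kawamata-cover reconstruction) go through in positive characteristic precisely because tameness makes every inertia group of order prime to $p$, with the ``moreover'' part deduced from compatibility with $\otimes$ and $\oplus$ --- exactly the two points you spell out. The only slip is a convention one: with the paper's decreasing filtration ($E_{s+t}\subset E_t$ for $s\ge 0$, $E_{t+1}=E_t\otimes\mathcal O_X(-D)$) your twist should read $E_t=\bigl(f_*(V\otimes\mathcal O_Y(-\lceil N_it\rceil\,\widetilde D_i))\bigr)^{\Gamma}$ (a single parameter $t$, not a multi-index $t_i$), since $\mathcal O_Y(+\lfloor N_it\rfloor\,\widetilde D_i)$ produces an increasing family.
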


\begin{proof}
In \cite{Bis}, \cite{Bo1}, \cite{Bo2} this is proved under the assumption that the
characteristic of the base field $k$ is zero. However, when the characteristic of
$k$ is positive, the same proofs works if we restrict to the tame parabolic bundles.

The moreover part follows from the fact that the above orbifold parabolic correspondence commutes with tensor products and direct sums.
\end{proof}

\begin{question}
In general when $D$ may not be normal crossing divisor, is $\pi^{N,ct}(X^o)$
isomorphic to $\pi^{N,pt}(X^o)$?
\end{question}

\section{Lefschetz theorem for tame Nori fundamental group}

Let $X$ be an irreducible smooth projective variety and $D\, \subset\, X$ a simple normal crossing divisor.
Let
\begin{equation}\label{eqio}
\iota\ :\ X_0\ :=\ X\setminus D \ \hookrightarrow\ X
\end{equation}
be the inclusion map.

Take a parabolic vector bundle $E_*$ on $X$ with parabolic structure over $D$. The vector bundle
$E_0$ underlying $E_*$ will be denoted by $E$, for notational convenience. Consider the parabolic
tensor power $E^{\otimes p}_*$, where $p\, >\, 0$ is the characteristic of the base field $k$; for
notational convenience, $E^{\otimes p}_*$ will be denoted by $\mathbb{E}_*$. Note that
$\iota^* \mathbb{E}_t\,=\, \iota^* E^{\otimes p}$ for all $t\, \in\, \mathbb R$, where $\iota$ is the map
in \eqref{eqio}. In particular, $\mathbb{E}_t$ is a subsheaf of the quasi-coherent sheaf
$\iota_* \iota^* E^{\otimes p}$. The two vector bundles $\mathbb{E}_t$ and $(E_t)^{\otimes p}$ do not coincide
in general; the notation ${\mathbb E}_*$ was introduced in order to make this distinction.

We have a subbundle
\begin{equation}\label{ece}
\widetilde{\mathcal E}\ :=\ \{v^{\otimes p}\, \in\, \iota^* E^{\otimes p}\,\, \big\vert\,\, v\, \in\, \iota^* E\}
\ \subset \ \iota^* E^{\otimes p}
\end{equation}
over $X_0$. Note that the map $\iota^* E\, \longrightarrow\, \iota^* E^{\otimes p}$, $v\, \longmapsto\,
v^{\otimes p}$, is not a morphism of vector bundles because it is not ${\mathcal O}_{X_0}$--linear. However,
the image of this map is a subbundle of $\iota^* E^{\otimes p}$, which we have denoted by $\widetilde{\mathcal E}$.

As mentioned above, $\mathbb{E}_t$ is a subsheaf of $\iota_* \iota^* E^{\otimes p}$. Now define
\begin{equation}\label{fppb}
{\mathcal E}_{t}\ :=\ \mathbb{E}_t\cap \iota_* \widetilde{\mathcal E}\ \subset \
\iota_*\iota^* E^{\otimes p};
\end{equation}
note that we have $\iota_* \widetilde{\mathcal E}\, \subset\, \iota_*\iota^* E^{\otimes p}$ (see \eqref{ece}).
The intersection of sheaves in \eqref{fppb} takes place inside $\iota_*\iota^* E^{\otimes p}$.

Since $\mathbb{E}_{a+b}\, \subset\, \mathbb{E}_b$ for all $a\, \geq\, 0$, it follows
that ${\mathcal E}_{s+t}\, \subset\, {\mathcal E}_t$ for all $s\, \geq\, 0$. We have
$\mathbb{E}_{t+1}\,=\, \mathbb{E}_t\otimes {\mathcal O}_X(- D)$ for all $t$, and also
$\iota_*\iota^* E^{\otimes p}\,=\, \iota_*\iota^* (E^{\otimes p}\otimes {\mathcal O}_X(-D))$.
Using these, from \eqref{fppb}
it follows immediately that ${\mathcal E}_{t+1}\,=\, {\mathcal E}_t
\otimes {\mathcal O}_X(-D)$ for all $t$.
Also, if $\mathbb{E}_{s+t}\,=\, \mathbb{E}_{t}$ for some $s\, \in\, {\mathbb R}$ and some $t\, >\, 0$, then clearly we have
${\mathcal E}_{s+t}\,=\, {\mathcal E}_{t}$. All these together imply that $\{{\mathcal E}_t\}_{t\in {\mathbb R}}$ defines a parabolic
vector bundle on $X$ with parabolic structure over $D$. Let
\begin{equation}\label{fppb2}
{\mathcal E}_*\ :=\ \{{\mathcal E}_t\}_{t\in\mathbb R}
\end{equation}
denote this parabolic vector bundle.
\begin{definition}
The parabolic bundle ${\mathcal E}_*$ is called the Frobenius pullback of the parabolic bundle $E_*$.
\end{definition}

Note that for usual vector bundles --- meaning when the parabolic divisor $D$ is the zero
divisor --- we have ${\mathcal E}_t\,=\, {\mathcal E}_0\,=\, F^*_X E$, where
$$
F_X\ :\, X\ \longrightarrow\ X
$$
is the absolute Frobenius morphism of $X$.

Let $({\mathcal E}_*)^*$ be the parabolic dual of the parabolic vector bundle ${\mathcal E}_*$ in \eqref{fppb2}.
Then $({\mathcal E}_*)^*$ coincides with the parabolic vector bundle constructed as in \eqref{fppb2}
by substituting the parabolic dual $E^*_*$ in place of $E_*$.
Take parabolic vector bundles $E^1_*$ and $E^2_*$. Let ${\mathcal E}^1_*$ and ${\mathcal E}^2_*$ be the
parabolic vector bundles constructed as in \eqref{fppb2} by substituting $E^1_*$ and $E^2_*$ respectively
in place of $E_*$. The parabolic vector bundles $E^1_*\oplus E^2_*$ and $E^1_*\otimes E^2_*$ will be denoted by
$V_*$ and $W_*$ respectively. Let ${\mathcal V}_*$ and ${\mathcal W}_*$ be the
parabolic vector bundles constructed as in \eqref{fppb2} by substituting $V_*$ and $W_*$ respectively in place of $E_*$.
Then ${\mathcal V}_*$ (respectively, ${\mathcal W}_*$) is identified with ${\mathcal E}^1_*\oplus
{\mathcal E}^2_*$ (respectively, ${\mathcal E}^1_*\otimes{\mathcal E}^2_*$). In other words we have the following.

\begin{pro}
 The Frobenius pullback of parabolic bundles commutes with taking parabolic dual, direct sums and tensor products.
\end{pro}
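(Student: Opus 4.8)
The plan is to establish the three compatibilities one at a time, in each case following the two ingredients that enter the construction \eqref{fppb} --- the parabolic tensor power $\mathbb{E}_*$ and the subbundle $\widetilde{\mathcal E}$ of \eqref{ece} --- through the operation and then checking that the defining intersection is preserved. The crucial first observation is that over $X_0$ the subbundle $\widetilde{\mathcal E}$ is canonically isomorphic to $F_X^* E|_{X_0}$, the restriction of the absolute Frobenius pullback: in the symmetric realization one has $(v_1+v_2)^{\otimes p}\,=\,v_1^{\otimes p}+v_2^{\otimes p}$ (the coefficients $\binom{p}{j}$ vanish modulo $p$ for $0<j<p$), so the assignment $v\,\mapsto\, v^{\otimes p}$ is additive and $p$-semilinear, and over the perfect field $k$ its image is a subbundle isomorphic to $F_X^* E|_{X_0}$. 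Since the ordinary Frobenius pullback is a symmetric monoidal functor, I obtain canonical identifications $\widetilde{E^1\oplus E^2}\,=\,\widetilde{E^1}\oplus\widetilde{E^2}$, $\widetilde{E^1\otimes E^2}\,=\,\widetilde{E^1}\otimes\widetilde{E^2}$ and $\widetilde{E^*}\,=\,(\widetilde E)^*$, each compatible with the inclusions into the relevant tensor powers of $\iota^*E$.

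With this in hand the dual and direct-sum cases are essentially formal. For the dual one uses that the parabolic tensor power commutes with parabolic dualization together with the perfect pairing between $(E^*)^{\otimes p}$ and $E^{\otimes p}$, under which $\widetilde{E^*}$ matches $(\widetilde E)^*$; hence intersecting commutes with dualizing and \eqref{fppb} returns the Frobenius pullback of $E^*_*$. For the direct sum the point, already implicit in the freshman's dream above, is that $\widetilde{E^1\oplus E^2}$ lands inside the two pure summands $(E^1)^{\otimes p}\oplus(E^2)^{\otimes p}$ of $(E^1\oplus E^2)^{\otimes p}$, on which the parabolic filtration of $(E^1_*\oplus E^2_*)^{\otimes p}$ restricts to those of $(E^1_*)^{\otimes p}$ and $(E^2_*)^{\otimes p}$; the mixed multinomial summands therefore never meet $\iota_*\widetilde V$, and the intersection \eqref{fppb} splits as $\mathcal E^1_t\oplus\mathcal E^2_t$.

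The tensor product is where I expect the real work to be. Commutativity and associativity of the parabolic tensor product give $(E^1_*\otimes E^2_*)^{\otimes p}\,\cong\,(E^1_*)^{\otimes p}\otimes(E^2_*)^{\otimes p}$, so computing $\mathcal W_t$ from \eqref{fppb} amounts to intersecting the convolution $\sum_s \mathbb{E}^1_s\otimes\mathbb{E}^2_{t-s}$ with $\iota_*(\widetilde{E^1}\otimes\widetilde{E^2})$, whereas the parabolic tensor product of the two Frobenius pullbacks is $\sum_s(\mathbb{E}^1_s\cap\iota_*\widetilde{E^1})\otimes(\mathbb{E}^2_{t-s}\cap\iota_*\widetilde{E^2})$. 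The content is the distributivity of this intersection over the sum of tensor products, which is not formal, since intersection of subsheaves need not commute with tensor product. I would settle it by localizing at the generic point of each irreducible component $D_i$ of $D$, where the ambient sheaves become free modules over a discrete valuation ring: choosing a local frame adapted to the parabolic weights (an eigenbasis for the local monodromy) makes the filtration, the subbundle $\widetilde{\,\cdot\,}$, and the tensor product all act diagonally on the weight lines, and the identity reduces to the elementary compatibility of the weight transformation $\alpha\,\mapsto\, p\alpha-[p\alpha]$ induced by the Frobenius pullback with the convolution $(\alpha,\beta)\,\mapsto\,\alpha+\beta-[\alpha+\beta]$ governing parabolic tensor weights. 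Since $D$ is assumed to be a strict normal crossing divisor, a cleaner alternative is available: pass through the orbifold--parabolic correspondence of Theorem \ref{orb-par-corr} (the Frobenius pullback preserves tameness because $\alpha\,\mapsto\, p\alpha-[p\alpha]$ keeps denominators prime to $p$), under which the Frobenius pullback of tame parabolic bundles becomes the ordinary $F^*$ of the associated equivariant bundles on an orbifold cover; as $F^*$ is symmetric monoidal, all three compatibilities then follow simultaneously. I expect the diagonal computation over the discrete valuation ring to be routine but the place where care is genuinely needed, with the orbifold route supplying the conceptual shortcut.
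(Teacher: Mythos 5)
Your proposal is correct, and it is in fact more detailed than the paper itself: the paper offers no proof of this proposition at all --- the statement merely summarizes (``In other words we have the following'') identifications asserted without argument in the paragraph preceding it, and the orbifold interpretation of the Frobenius pullback (that $\mathcal{E}_*$ corresponds to the equivariant bundle $F^*_Y\widehat{E}$) is recorded, again without proof, only in the paragraph \emph{after} the proposition. Your second route --- transporting everything through Theorem \ref{orb-par-corr} and using that $F^*_Y$ is symmetric monoidal on $\Aut(Y/X)$-equivariant bundles --- is exactly the paper's implicit viewpoint; note, however, that the dictionary ``parabolic Frobenius pullback $=$ equivariant $F^*_Y$'' is itself unproved in the paper and is of essentially the same depth as the direct verification, so invoking it is a genuine shortcut only modulo that input. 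Your first, direct route is an argument the paper does not contain, and one of your observations actually repairs a sloppiness in \eqref{ece}: the set $\{v^{\otimes p}\}$ is \emph{not} a linear subsheaf of $E^{\otimes p}$ (the cross terms of $(v_1+v_2)^{\otimes p}$ survive in the full tensor power; only in $\operatorname{Sym}^p$ do the multinomial coefficients $\binom{p}{j}$ kill them), so the ``symmetric realization'' you insist on is needed even to make sense of $\widetilde{\mathcal E}\cong F_X^*E\big\vert_{X_0}$, and your weight bookkeeping --- compatibility of $\alpha\mapsto p\alpha-[p\alpha]$ with $(\alpha,\beta)\mapsto \alpha+\beta-[\alpha+\beta]$, preserving denominators prime to $p$ --- is correct.

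Two small points would close your sketch completely. First, in the tensor-product case your DVR computation at the generic points of the $D_i$ only checks codimension one and sees nothing at the crossing strata $D_i\cap D_j$; you should add that this suffices because all the sheaves $\mathbb{E}_t$, $\mathcal{E}_t$ are by hypothesis locally free, hence reflexive, so two subsheaves of a common torsion-free sheaf that agree outside codimension two coincide. Second, in the dual case the pairing you use should be made explicit: the canonical pairing $(E^*)^{\otimes p}\otimes E^{\otimes p}\to \mathcal{O}_{X_0}$ restricts on the power subbundles to $\langle \xi^{\otimes p}, v^{\otimes p}\rangle=\langle \xi, v\rangle^p$, i.e.\ to the Frobenius twist of the evaluation pairing, which is what identifies $\widetilde{E^*}$ with $(\widetilde{E})^*$ compatibly with the parabolic dual weights $-\alpha-[-\alpha]$. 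With these two lines added, your direct argument is a complete proof, and arguably a more self-contained one than the route the paper gestures at.
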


Consider the vector bundle $\widetilde{\mathcal E}\ \longrightarrow\ X_0$ (see \eqref{eqio} and \eqref{ece}).
It has a canonical integrable connection whose integrable sections over any open subset $U'\, \subset\, X_0$
are of form $s^{\otimes p}$, where $s$ is any section of $E$ over $U'$. This produces an integrable connection
on the Frobenius pullback of vector bundles (usual vector bundles --- not parabolic bundles).

Take a logarithmic connection $\nabla$ on a vector bundle $W$ singular over a divisor $\Delta$. Perform an elementary
transformation of $W$ over $\Delta$ using a subbundle $V\, \subset\, W\big\vert_\Delta$. Let $W'$
be this vector bundle obtained using elementary transformation of $W$.
Then $\nabla$ produces a logarithmic connection on $W'$ if and only if $V$ is preserved by the residue of $\nabla$.

Since the vector bundles $\mathcal{E}_t$, $t\, \in\, \mathbb R$ (see \eqref{fppb2}), are obtained using elementary
transformations starting from the Frobenius pullback of the vector bundle $\mathcal{E}_0$, the above fact implies
that the integrable connection on the Frobenius pullback of the vector bundle $\mathcal{E}_0$ produces a
logarithmic connection on $\mathcal{E}_t$ for all $t\, \in\, {\mathbb R}$,
where ${\mathcal E}_t$ is the vector bundle in \eqref{fppb2}. This
logarithmic connection on $\mathcal{E}_t$ will be denoted by
\begin{equation}\label{ae1}
\nabla^{\mathcal{E}_t}.
\end{equation}

When the parabolic divisor $D$ is zero, the above logarithmic connection $\nabla^{{\mathcal
E}_t}$ on $\mathcal{E}_t$ coincides  with the canonical integrable connection on the
Frobenius pullback $F^*_X E$ (recall that ${\mathcal E}_t\,=\, F^*_X E$ when $D$ is zero).

For any parabolic subbundle $V_*$ of the parabolic vector bundle $E_*$, the corresponding parabolic vector
bundle $\mathcal{V}_*$ --- constructed as in \eqref{fppb2} by substituting $V_*$ in place of $E_*$ --- is
clearly a parabolic subbundle of the parabolic vector bundle $\mathcal{E}_*$. The following converse
of it is a straightforward consequence of the construction of the logarithmic connection in \eqref{ae1}.

\begin{lemma}\label{alem1}
Let $W_*$ be a parabolic subbundle of the parabolic vector bundle $\mathcal{E}_*$.
There is a parabolic subbundle $V_*\, \subset\, E_*$ such that the corresponding
parabolic vector bundle $\mathcal{V}_*$ coincides with $W_*$ if and only if $W_t$ is
preserved by the logarithmic connection $\nabla^{\mathcal{E}_t}$ on $\mathcal{E}_t$
(see \eqref{ae1}) for all $t\, \in\, \mathbb R$.
\end{lemma}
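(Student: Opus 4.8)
The plan is to prove the two implications separately, treating the ``only if'' direction as a short flatness computation and reserving the real work for the ``if'' direction, which rests on Cartier descent together with an extension across $D$.

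For the necessity direction, suppose $W_*\,=\,\mathcal{V}_*$ for some parabolic subbundle $V_*\,\subset\, E_*$. Over $X_0$ the canonical connection on $\widetilde{\mathcal E}\,=\,F_X^* E$ has horizontal sections locally of the form $s^{\otimes p}$ with $s$ a local section of $E$; restricting $s$ to sections of $V$ shows that $\iota^*\mathcal{V}_t\,=\,F_X^*(V\vert_{X_0})$ is spanned by horizontal sections, hence is preserved by the canonical connection over $X_0$. Consequently the second fundamental form $\mathcal{V}_t\,\longrightarrow\, (\mathcal{E}_t/\mathcal{V}_t)\otimes\Omega_X(\log D)$ associated to $\nabla^{\mathcal{E}_t}$ is an $\mathcal{O}_X$-linear map vanishing on $X_0$. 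Since its target is locally free, vanishing on the dense open subset $X_0$ forces it to vanish identically, so $W_t\,=\,\mathcal{V}_t$ is preserved by $\nabla^{\mathcal{E}_t}$ for every $t$.

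For the sufficiency direction, assume each $W_t$ is preserved by $\nabla^{\mathcal{E}_t}$. First I would restrict to $X_0$: the subbundle $\iota^* W_0\,\subset\, \widetilde{\mathcal E}\,=\,F_X^*E$ is horizontal for the canonical connection, whose $p$-curvature vanishes, so by Cartier descent there is a unique subbundle $V^0\,\subset\, E\vert_{X_0}$ with $F_X^* V^0\,=\,\iota^* W_0$. I would then extend $V^0$ across $D$ by setting $V\,:=\,(\iota_* V^0)\cap E$ inside $\iota_*\iota^* E$, equip it with the parabolic structure induced from $E_*$ to obtain $V_*$, and check that $\mathcal{V}_*$ and $W_*$ agree on $X_0$ by construction. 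It then remains to prove that they agree at every weight level $t$ across $D$.

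This last matching, together with the local freeness of the extended $V_*$ along $D$, is what I expect to be the main obstacle. The bundles $\mathcal{E}_t$, for $t$ ranging over a unit interval, are obtained from $F_X^*\mathcal{E}_0$ by elementary transformations along $D$ governed by the residue of $\nabla^{\mathcal{E}_t}$, and by the criterion recalled just before the lemma a subsheaf survives such an elementary transformation exactly when it is preserved by the relevant residue. The hypothesis that $W_t$ is preserved for all $t$, and not merely at $t\,=\,0$, is precisely what guarantees that the residue-eigenspace filtration cutting out these elementary transformations restricts compatibly to $W_*$; this is what allows the Cartier descent performed over $X_0$ to be propagated compatibly with the full filtration and identifies $\mathcal{V}_t$ with $W_t$ at each level. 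I expect the delicate point to be verifying that the saturated extension $V$ is genuinely a subbundle of $E$ along $D$ and that the descended parabolic weights coincide; both should reduce, after passing to a local frame adapted to the components of $D$ and using tameness of the weights, to a direct comparison of residue eigenspaces.
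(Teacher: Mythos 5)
Your overall strategy is exactly the one the paper intends: the paper gives no written proof of this lemma, saying only that it is ``a straightforward consequence of the construction of the logarithmic connection'' in \eqref{ae1}, and the content of that remark is precisely your combination of Cartier descent over $X_0$ with the elementary-transformation/residue criterion along $D$. Your necessity direction is complete and correct, up to one small imprecision: the target $(\mathcal{E}_t/\mathcal{V}_t)\otimes\Omega_X(\log D)$ of the second fundamental form need not be locally free, only torsion-free (which holds because each $\mathcal{V}_t$ is saturated in $\mathcal{E}_t$), and torsion-freeness is all that the vanishing-on-a-dense-open argument requires.

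The sufficiency direction, however, stops at exactly the point you yourself flag as the main obstacle: you never verify that $V:=(\iota_*V^0)\cap E$ is a subbundle of $E$ along $D$, that it carries the induced parabolic weights, or that $\mathcal{V}_t=W_t$ for all $t$ across $D$ --- these are asserted to ``reduce to a direct comparison of residue eigenspaces'' but the comparison is not carried out, so as written this half is a program rather than a proof. The cleanest way to close it, and implicitly the paper's route given the remark placed immediately after the lemma, is to avoid working on $X$ altogether: choose a tamely ramified Galois cover $f\colon Y\longrightarrow X$ (tameness of the weights is what makes it exist) on which $E_*$ corresponds to a $\Gamma$-equivariant bundle $\widehat{E}$ and $\mathcal{E}_*$ corresponds to $F_Y^*\widehat{E}$. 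The parabolic subbundle $W_*$ then corresponds to an equivariant subbundle $\widehat{W}\subset F_Y^*\widehat{E}$, and your hypothesis that $W_t$ is $\nabla^{\mathcal{E}_t}$-horizontal for \emph{all} $t$ (not just $t=0$, which controls nothing along $D$) translates into $\widehat{W}$ being horizontal for the canonical connection on $F_Y^*\widehat{E}$ across the ramification locus. Ordinary Cartier descent on the smooth variety $Y$ --- automatically $\Gamma$-equivariant by uniqueness of the descended subbundle --- produces $\widehat{V}\subset\widehat{E}$ with $F_Y^*\widehat{V}=\widehat{W}$, and the corresponding parabolic subbundle $V_*\subset E_*$ satisfies $\mathcal{V}_*=W_*$ by the equivariant--parabolic dictionary; this settles in one stroke the subbundle property along $D$, the weights, and the matching at every level $t$, all of which your extension-by-saturation construction leaves open. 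Your residue-eigenspace route can also be made to work (the residue of $\nabla^{\mathcal{E}_t}$ along each $D_i$ is semisimple with eigenvalues determined by the parabolic weights, and a horizontal subsheaf decomposes accordingly), but that local computation is exactly what your proposal defers, so the gap, while fillable by your own stated mechanism, is genuinely present in the write-up.
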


Take a parabolic vector bundles $E_*$ on $X$.
Let $f\, :\, Y\, \longrightarrow\, X$ be a tamely ramified Galois covering satisfying the
following condition: There is an equivariant vector bundle $\widehat{E}$
on $Y$ which corresponds to ${E}_*$. Then the parabolic vector bundle $\mathcal{E}_*$
corresponds to the equivariant vector bundle $F^*_Y \widehat{E}$, where $F_Y\, :\, Y\,
\longrightarrow\, Y$ is the Frobenius morphism of $Y$.

Let $Y$ be an irreducible smooth projective variety over $k$. Consider the category
$\text{Vect}^f(Y)$ of essentially finite vector bundles on $Y$. For a reduced smooth very ample
hypersurface $\iota_H\, :\, H\, \hookrightarrow\, Y$, let $\text{Vect}^f(H)$ denote the category
of essentially finite vector bundles on $H$. Then the restriction functor
$$
\text{Vect}^f(Y) \ \longrightarrow\ \text{Vect}^f(H), \ \ \ V \, \longmapsto\, \iota^*_HV
$$
is fully faithful, provided $H$ is of sufficiently large degree (larger than an effective number
independent of $V$) (\cite[Theorem 1.1 (1)]{Biswas-Holla}, \cite[Theorem 10.2]{Langer}).

Also, for any essentially finite vector bundle $V$ on $Y$, any essentially finite subbundle $W$ of
$\iota^*_HV$ is of the form $\iota^*_H W'$ for an essentially finite vector bundle $W'$ on $Y$,
provided the degree of $H$ is sufficiently large (larger than an effective number
independent of $V$ and $W$) (\cite[Theorem 1.1 (1)]{Biswas-Holla},\cite[Theorem 10.2]{Langer}).

We observe below that the same holds for essentially finite parabolic vector bundles on $(X,\, D)$. Fix a very ample line bundle ${\mathcal O}_X(1)$ on $X$. Now take a reduced smooth
hypersurface $$\iota_{Z} \, :\, {Z}\, \longrightarrow\, X$$ with
\begin{equation}\label{ed}
{Z}\, \in\, \big\vert {\mathcal O}_X(d)\big\vert\,=\,
\big\vert {\mathcal O}_X(1)^{\otimes d}\big\vert,
\end{equation}
such that $Z$
intersects the parabolic divisor $D$ (see \eqref{eqio}) transversely. Let
$$
\text{PVect}^f(X,\, D)
$$
denote the category of essentially finite tame parabolic vector bundles on $X$ with parabolic
structure on $D$. Let
$$
\text{PVect}^f({Z},\, {Z}\cap D)
$$
denote the category of essentially finite tame parabolic vector bundles on $Z$ with parabolic
structure on ${Z}\cap D$. Note that the morphisms in this category are bundle homomorphisms which preserve the filtration.
\begin{pro}
 The restriction functor
$$
\text{PVect}^f(X,\, D)\ \longrightarrow\ \text{PVect}^f({Z},\, {Z}\cap D),\ \
\ W_* \, \longmapsto\, \iota^*_{Z}W_*
$$
is fully faithful, provided the degree of $Z$ is sufficiently large.
\end{pro}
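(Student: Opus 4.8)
The plan is to reduce the statement to the non-parabolic restriction theorem on a suitable Galois cover, using the orbifold--parabolic correspondence of Theorem \ref{orb-par-corr}. Given two objects $W_*,\,W'_*$ of $\PVect^f(X,\,D)$, I would first choose a single tamely ramified Galois cover $f\,:\,Y\,\longrightarrow\,X$, \'etale over $X^o$, with $G\,=\,\Aut(Y/X)$, such that both $W_*$ and $W'_*$ correspond to essentially finite $G$--equivariant vector bundles $\widehat{W}$ and $\widehat{W}'$ on $Y$; such a common cover exists because both bundles are essentially finite and hence lie in a finite Tannakian subcategory, and compositum of tame covers is tame. By the definition of morphisms in the orbifold (hence parabolic) category one has
\begin{equation}\label{e:hom-upstairs}
\Hom(W_*,\,W'_*)\,\cong\,\Hom_{\cO_Y}(\widehat{W},\,\widehat{W}')^{G},
\end{equation}
the invariants being taken for the induced $G$--action on the space of global homomorphisms.

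Next I would analyse the restricted cover. Set $Y_Z\,:=\,f^{-1}(Z)$, a $G$--stable divisor in $\big\vert (f^*\cO_X(1))^{\otimes d}\big\vert$. The key geometric input is that, for a general $Z$ of large degree meeting $D$ transversely, $Y_Z$ is smooth and irreducible and $f\vert_{Y_Z}\,:\,Y_Z\,\longrightarrow\,Z$ is again a tamely ramified $G$--Galois cover, \'etale over $Z\setminus(Z\cap D)$. Smoothness at the ramification points follows from the tameness of $f$ together with the transversality of $Z$ to $D$, via the local model $t\,\longmapsto\,t^{e}$ with $p\nmid e$, while irreducibility is a Bertini-type statement. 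Because $Z$ meets $D$ transversely the inertia actions, and hence the parabolic weights, are unchanged under restriction, so the orbifold--parabolic correspondence is compatible with restriction: the $G$--equivariant bundle $\widehat{W}\vert_{Y_Z}$ corresponds to $\iota_Z^*W_*$, and likewise for $W'_*$. Consequently $\Hom(\iota_Z^*W_*,\,\iota_Z^*W'_*)\,\cong\,\Hom_{\cO_{Y_Z}}(\widehat{W}\vert_{Y_Z},\,\widehat{W}'\vert_{Y_Z})^{G}$.

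With these identifications the proof reduces to the non-parabolic restriction theorem on $Y$. Applying \cite[Theorem 1.1 (1)]{Biswas-Holla} and \cite[Theorem 10.2]{Langer} to the essentially finite bundles $\widehat{W},\,\widehat{W}'$ and the hypersurface $Y_Z\subset Y$, the restriction map
\begin{equation}\label{e:hom-restrict}
\Hom_{\cO_Y}(\widehat{W},\,\widehat{W}')\,\longrightarrow\,\Hom_{\cO_{Y_Z}}(\widehat{W}\vert_{Y_Z},\,\widehat{W}'\vert_{Y_Z})
\end{equation}
is an isomorphism once $Y_Z$ has sufficiently large degree. This map is $G$--equivariant by naturality of restriction, and since an isomorphism of $G$--representations restricts to an isomorphism on invariants (no exactness of the invariants functor is needed, so the possibility that $p$ divides $|G|$ is harmless), taking $G$--invariants in \eqref{e:hom-restrict} and comparing with \eqref{e:hom-upstairs} yields the desired isomorphism $\Hom(W_*,\,W'_*)\,\cong\,\Hom(\iota_Z^*W_*,\,\iota_Z^*W'_*)$.

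The main obstacle is the uniformity of the degree bound. The effective bound of \cite{Biswas-Holla,Langer} is independent of the bundle but depends on the pair $(Y,\,f^*\cO_X(1))$, and this cover varies with the objects $W_*,\,W'_*$; as one moves up the tower of covers of $X$ the invariants of $Y$ (for instance $K_Y$, which acquires the ramification divisor) may grow. I would handle this by working one finite level at a time: each pair of objects lies in a finite Tannakian subcategory $\mathrm{Rep}(G)$ realised on a single cover $Y_G$, so full faithfulness holds on that subcategory as soon as $\deg Z\,\ge\, d_0(Y_G)$, and the consequences for the fundamental group schemes are then read off in the inverse limit. The delicate point, which I expect to require the most care, is either to verify this level-by-level argument suffices for the intended application or, more ambitiously, to track how the bound of \cite{Biswas-Holla,Langer} transforms under pullback by $f$ so as to obtain a threshold depending only on $X$ and its polarization.
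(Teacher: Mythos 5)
Your reduction to a common Galois cover is sound for any \emph{fixed} pair of objects, and your invariants bookkeeping is correct, but the argument does not prove the proposition as stated, and you have put your finger on exactly why: the effective threshold of \cite[Theorem 1.1 (1)]{Biswas-Holla} and \cite[Theorem 10.2]{Langer} depends on the polarized variety, here $(Y,\, f^*\cO_X(1))$, and the cover $Y$ varies with $(W_*,\, W'_*)$. Since trivializing all objects of $\PVect^f(X,\,D)$ requires tame covers with unbounded ramification (the weight denominators, though prime to $p$, are unbounded), invariants of $Y$ such as $K_Y$ grow along the tower and no single $d_0$ emerges. Your first fallback --- level-by-level full faithfulness on each finite Tannakian subcategory --- is strictly weaker than the proposition and does \emph{not} suffice for the intended application: in Theorem \ref{4.5} the hypersurface $Z$ is fixed once and for all, and surjectivity of $\pi^{N,pt}(Z\setminus Z\cap D)\,\longrightarrow\,\pi^{N,pt}(X\setminus D)$ via \cite[Lemma 2.1]{BiswasTohuku} needs the restriction functor to be fully faithful (with the subobject property) on the \emph{entire} category for that one $Z$. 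With a level-dependent bound $d_0(P)\to\infty$ one only gets, for each branch data $P$, some $Z$ depending on $P$; if the image of $\pi^{N,pt}(Z\setminus Z\cap D)$ were a proper closed subgroup scheme it would be detected at some finite level $P$, but nothing guarantees $\deg Z\,\ge\, d_0(P)$, so the limit statement for a fixed $Z$ cannot be extracted. (A secondary caveat: irreducibility and smoothness of $f^{-1}(Z)$ is not ordinary Bertini, since $f^{-1}(Z)$ is not a general member of a linear system on $Y$; it needs pullback Bertini-type results as in \cite{bertini}, and connectedness is essentially a tame Lefschetz statement in the spirit of \cite{EK}, which skirts circularity.)

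The paper closes the gap by never leaving $X$: it reruns Langer's argument (\cite[Theorems 9.1 and 10.2]{Langer}) inside the parabolic category on the fixed polarized pair $(X,\, \cO_X(1))$, replacing the sheaf $\mathcal{H}om(E,\,E')$ by the sheaf of filtration-preserving homomorphisms $\mathcal{H}om(E_*,\,E'_*)\,\cong\, (E_*)^*\otimes E'_*$, and replacing Frobenius pullbacks of bundles by the parabolic Frobenius pullback $\mathcal{E}_*$ constructed in Section 4 --- this is precisely what that construction, the logarithmic connections $\nabla^{\mathcal{E}_t}$ and Lemma \ref{alem1} were set up for. The uniformity you are missing is supplied by boundedness: via Theorem \ref{orb-par-corr}, the parabolic Hom sheaf and all of its iterated parabolic Frobenius pullbacks correspond to strongly semistable equivariant bundles of fixed rank with vanishing Chern classes, a bounded family, so every effective constant in the vanishing arguments depends only on $(X,\,\cO_X(1),\,D)$ and not on the objects. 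In short, the workable fix is the paper's strategy of transporting the \emph{Frobenius technique} across the orbifold--parabolic correspondence onto the fixed base, rather than transporting the \emph{objects} up to ever-larger covers, which is where your degree bound irretrievably escapes.
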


The proof is same as in \cite[Theorem 10.2]{Langer} with the slight modification. One replaces the sheaf of homomorphisms $\mathcal{H}om(E,E')$ between two vector bundles $E$ and $E'$  by  the sheaf of filtration preserving homomorphisms $\mathcal{H}om({E}_*,{E'}_*)$ between two parabolic vector bundles ${E}_*$ and ${E'}_*$. Also in the proof of \cite[Theorem 9.1]{Langer}, one observes that the $\mathcal{H}om(E_*,{E'}_*)\cong (E_*)^*\otimes {E'}_*$ is an essentially finite tame parabolic bundle and its iterated Frobenius pullbacks are also essentially finite tame parabolic bundle. Hence by Theorem \ref{orb-par-corr} and the fact that the family of strongly semistable equivariant vector bundles of fixed rank and vanishing Chern classes is bounded, we get that the Hom sheaf and its iterated Frobenius pullbacks lie in a bounded family.

Hence using \cite[Lemma 2.1]{BiswasTohuku} (which is an improvement of \cite[Proposition 2.21]{deligne-milne} for profinite group schemes) we obtain the final result.
 \begin{thm}\label{4.5}
  The natural map
 $$\pi^{N.pt}(Z\setminus Z\cap D)\,\longrightarrow\, \pi^{N.pt}(X\setminus D)$$
 is surjective (faithfully flat) if the dimension of $Z$ is at least one.
 \end{thm}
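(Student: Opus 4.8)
The plan is to deduce the statement purely from the Tannakian formalism, with all the geometric work funneled into two properties of the restriction functor. Since both $\PVect^f(X,\,D)$ and $\PVect^f(Z,\,Z\cap D)$ are Tannakian categories (Proposition~\ref{2.11}) with profinite Tannaka duals, the restriction tensor functor
$$
\iota_Z^*\,:\,\PVect^f(X,\,D)\,\longrightarrow\,\PVect^f(Z,\,Z\cap D)
$$
induces the homomorphism in question. By the refinement of \cite[Proposition~2.21]{deligne-milne} for profinite group schemes \cite[Lemma~2.1]{BiswasTohuku}, this homomorphism is faithfully flat if and only if (i) $\iota_Z^*$ is fully faithful, and (ii) every essentially finite tame parabolic subbundle of a restricted object $\iota_Z^* W_*$ is isomorphic to $\iota_Z^* V_*$ for some subobject $V_*$ of $W_*$. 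Thus the entire argument reduces to verifying (i) and (ii) for $Z$ of sufficiently large degree.

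First I would address full faithfulness (i). Writing $\mathcal{H}om(E_*,\,{E'}_*)\cong (E_*)^*\otimes {E'}_*$, full faithfulness amounts to the restriction map being an isomorphism on parabolic global sections. The essential input is the effective restriction theorem for essentially finite bundles of Biswas–Holla and Langer \cite[Theorem~1.1]{Biswas-Holla}, \cite[Theorem~10.2]{Langer}, which I would transport to the parabolic setting via the orbifold–parabolic correspondence of Theorem~\ref{orb-par-corr}. Concretely, $\mathcal{H}om(E_*,\,{E'}_*)$ is an essentially finite tame parabolic bundle, hence strongly semistable of parabolic degree zero by Proposition~\ref{2.11a}; passing to a tame Galois cover $Y\,\longrightarrow\, X$ realizes it as a strongly semistable equivariant bundle with vanishing Chern classes, and the same holds for all its iterated Frobenius pullbacks. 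Bundles of this kind with fixed rank form a bounded family, which is exactly what makes the effective degree bound in the restriction theorem independent of the chosen object; this is precisely the content of the Proposition preceding this theorem.

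Second I would establish the subobject-lifting property (ii) in the same spirit: given an essentially finite subobject $V_*\,\subset\,\iota_Z^* W_*$, I would invoke the parabolic analogue of the second assertion of \cite[Theorem~10.2]{Langer} (again through Theorem~\ref{orb-par-corr}), so that for $Z$ of large enough degree $V_*$ extends to an essentially finite tame parabolic subbundle of $W_*$ on $X$. Boundedness of the relevant families once more guarantees that the required degree of $Z$ can be taken uniformly. The hypothesis $\dim Z\,\ge\,1$ enters here to keep $Z$ connected and to validate the Lefschetz-type extension arguments, which degenerate when $Z$ is zero-dimensional.

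The main obstacle I anticipate is not the Tannakian bookkeeping, which is formal once (i) and (ii) are available, but the adaptation of Langer's effective restriction theorem to the tame parabolic category. The delicate points are checking that the Frobenius pullbacks of the parabolic $\mathcal{H}om$ bundle remain inside the tame essentially finite category, and that the boundedness of strongly semistable equivariant bundles with vanishing Chern classes survives the passage through the orbifold–parabolic dictionary. Once this boundedness is secured, the effective bound on $\deg Z$ follows and the criterion of \cite[Lemma~2.1]{BiswasTohuku} yields faithful flatness.
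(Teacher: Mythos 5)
Your core argument coincides with the paper's: both reduce the theorem to properties of the restriction functor $\iota_Z^*\,:\,\text{PVect}^f(X,\,D)\,\longrightarrow\,\text{PVect}^f(Z,\,Z\cap D)$, and both prove full faithfulness by adapting Langer's effective restriction theorem \cite[Theorem 10.2]{Langer}, replacing the sheaf of homomorphisms by the parabolic $\mathcal{H}om(E_*,\,E'_*)\cong (E_*)^*\otimes E'_*$, checking that this object and its iterated (parabolic) Frobenius pullbacks remain essentially finite and tame, and obtaining the needed boundedness through the orbifold--parabolic correspondence of Theorem \ref{orb-par-corr} together with boundedness of strongly semistable equivariant bundles of fixed rank and vanishing Chern classes. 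This is exactly the content of the Proposition preceding the theorem in the paper, and you identify the same delicate points (Frobenius pullbacks staying in the category, boundedness surviving the dictionary) that the paper's Section 4 machinery --- the parabolic Frobenius pullback, the logarithmic connection $\nabla^{\mathcal{E}_t}$, and Lemma \ref{alem1} --- is built to handle.

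The one genuine divergence is your Tannakian endgame, and it contains a mis-citation that leads you into unnecessary and under-justified work. You state that \cite[Lemma 2.1]{BiswasTohuku} makes faithful flatness equivalent to the conjunction of (i) full faithfulness and (ii) lifting of subobjects; but that conjunction is the classical criterion of \cite[Proposition 2.21]{deligne-milne}, and the whole point of the profinite refinement in \cite[Lemma 2.1]{BiswasTohuku} --- which is how the paper uses it --- is that for homomorphisms of profinite group schemes condition (i) \emph{alone} suffices. Consequently the paper never proves a parabolic subobject-extension statement, whereas you must, and your treatment of (ii) is only an assertion: you invoke ``the parabolic analogue of the second assertion of \cite[Theorem 10.2]{Langer}'' without carrying it through the orbifold correspondence, where one would have to extend an equivariant subbundle from the (possibly singular along the ramification, merely transverse) preimage of $Z$ in a tame Galois cover $Y$, with uniform degree bounds. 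Nothing suggests this is false --- the non-parabolic version is available in \cite{Biswas-Holla}, \cite{Langer}, and Lemma \ref{alem1} gives a tool for recognizing Frobenius-descended parabolic subbundles --- but as written it is an unproven step. The repair is immediate: apply \cite[Lemma 2.1]{BiswasTohuku} with its correct statement, note that both Tannaka duals are profinite (as recorded after the construction of $\pi^N((X,P),x)$ and via Proposition \ref{2.11}), and then your condition (ii) becomes redundant, reducing your argument to the paper's.
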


\section*{Acknowledgements}

We thank the referee for helpful comments to improve the exposition. The first
author is partially supported by a J. C. Bose Fellowship (JBR/2023/000003). The second author is partially supported by Core grant (File no: CRG/2023/006248).

\end{document}